\documentclass[12pt,leqno]{amsart}

\usepackage{amsfonts,amssymb}
\usepackage{graphicx,tikz}
\usepackage{tikz-cd} 
\usepackage{float}
\usepackage{amsmath, amsthm, amsfonts}
\usepackage{hyperref}
\usepackage{enumitem}  
\usepackage[capitalise]{cleveref}
\usepackage{comment}
\usepackage{enumitem}
\usepackage{amsrefs}
\usepackage{nicefrac}
\usepackage[official]{eurosym}
\usepackage{graphicx}
\usepackage{nomencl}
\usepackage{amsfonts}
\usepackage{hyperref}
\usepackage{tabto}
\usepackage{amssymb}
\usepackage{fancyhdr}
\usepackage{amscd}
\usepackage[english]{babel}

\usepackage[margin=1.25in]{geometry}

\newtheorem{theorem}{\sc Theorem}[section]
\newtheorem{lemma}[theorem]{\sc Lemma}

\author[P. Shumyatsky]{Pavel Shumyatsky} \address{Pavel Shumyatsky:  Department of Mathematics, University of Brasilia, 70910-900 Brasilia, Brazil} \email{pavel@unb.br}

\author[A. Thillaisundaram]{Anitha Thillaisundaram} 
\address{Anitha Thillaisundaram: Centre for Mathematical Sciences, Lund University,  223 62 Lund, Sweden}
\email{anitha.thillaisundaram@math.lu.se}

\date{\today}

\thanks{This research was supported by the  Royal Physiographic Society of Lund. The first author was also supported by FAPDF}

 \keywords{Profinite groups, soluble centralisers}
 \subjclass[2010]{Primary  20E18}

	\title{Profinite groups with soluble centralisers}

\begin{document}

\begin{abstract}
We show that a profinite group, in which the centralisers of non-trivial elements are metabelian, is either virtually pro-$p$ or virtually soluble of derived length at most 4. We furthermore show that a prosoluble group, in which the centralisers of non-trivial elements are soluble of bounded derived length, is  either soluble or virtually pro-$p$. 
\end{abstract}
\maketitle

\section{Introduction}

Groups where the centralisers of non-trivial elements satisfy certain properties is a well-established subject. Consider for instance groups with abelian centralisers, also called CA-groups. Finite CA-groups were classified by several mathematicians throughout the 20th century, including Suzuki~\cite{Suzuki}, who proved that simple CA-groups have even order. Suzuki’s work was an important precursor of the remarkable Feit–Thompson theorem~\cite{FT} on the solubility of groups of odd order. Centralisers of non-trivial elements also play an important role in coprime group actions, hence imposing conditions on centralisers in a group acting coprimely on another group has received more consideration; see~\cite{Khukhro} and references therein. 

Imposing restrictions  on the properties of the centralisers naturally has an effect on the structure of the corresponding  group. The study of this effect on a profinite group  is relatively new,  see~\cite{zapa, S1, SZ, ccm, AS}, with the pioneering work in this direction being the study of profinite CA-groups  in~\cite{zapa}, where it was shown that a profinite CA-group is, up to finite index, an abelian group or a pro-$p$ group. This result was then generalised in~\cite{S1} to profinite groups with pronilpotent centralisers, and  all such groups were classified up to finite index. We also mention an earlier work by Shalev~\cite{Shalev}, who considered the slightly different situation of a profinite group in which the centraliser of any element is either finite or of finite index, and he showed that such a group is virtually abelian. Some recent work in this direction includes \cite{DMS} and \cite{AS}.

In this paper we are interested in studying profinite groups in which the centraliser of any non-trivial element is  soluble of bounded derived length $d$, with special emphasis on the case $d=2$. For a positive integer~$d$, following the notation in~\cite{casolo}, we denote by $\text{CA}_d$ the class of groups in which the centraliser of any non-trivial element is soluble of  derived length at most~$d$.  Finite soluble $\text{CA}_d$-groups were studied by Casolo and Jabara in~\cite{casolo}, where they gave bounds on the derived length of the group in terms of~$d$; see Lemma~\ref{lem:casolo}.

The following is our main result.
\begin{theorem}\label{thm:main}
    Let $G$ be a profinite $\textup{CA}_2$-group.     Then $G$  is either virtually pro-$p$ or virtually soluble of derived length at most 4.
\end{theorem}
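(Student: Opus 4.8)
The plan is to isolate the soluble from the non-soluble behaviour and to reduce everything to the structure of \emph{finite} $\textup{CA}_2$-groups, governed on the one hand by \cref{lem:casolo} and on the other by the classification of finite simple groups. The basic observation making this workable is that the $\textup{CA}_2$ property passes to \emph{closed subgroups} (a subgroup of a metabelian group is metabelian, so $C_H(h)=C_G(h)\cap H$ is metabelian for every closed $H\le G$ and $1\neq h\in H$), even though it is \emph{not} inherited by quotients. Hence it suffices to exhibit an open subgroup that is either pro-$p$ or soluble of derived length at most $4$, and the stated dichotomy will follow. So I would aim to prove: \emph{if $G$ is not virtually pro-$p$, then $G$ has an open soluble subgroup of derived length at most $4$.}

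The first substantive step is to pin down the structure of a finite non-soluble $\textup{CA}_2$-group $H$ via its generalised Fitting subgroup $F^*(H)=F(H)E(H)$. If $E(H)$ had two distinct components $K_1,K_2$, then for $1\neq x\in K_1$ the centraliser $C_H(x)$ would contain the non-abelian group $K_2$, contradicting metabelianity; hence $E(H)$ is a single component $K$. If some $1\neq z$ centralised $K$ — in particular if $Z(K)\neq 1$ or $F(H)\neq 1$ — then $C_H(z)\supseteq K$ would again fail to be metabelian. Thus $Z(K)=1$, $F(H)=1$ and $C_H(K)=1$, so $K$ is a non-abelian simple group with $K\trianglelefteq H\leq\operatorname{Aut}(K)$; that is, $H$ is almost simple. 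Appealing to the classification of finite simple groups, I would then determine which simple groups $K$ satisfy the $\textup{CA}_2$ condition: these are rank-one simple groups of Lie type, prominently $\mathrm{PSL}(2,q)$ and the Suzuki groups $\mathrm{Sz}(q)$ (together with finitely many small exceptions), each defined over a field of some characteristic $p$ and each possessing abelian centralisers of semisimple elements and metabelian centralisers of unipotent elements and involutions. The essential output is that every non-abelian composition factor of $G$ lies in this short list, attached to a single characteristic prime.

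The heart of the argument is the passage to the profinite inverse limit. Passing to the prosoluble radical $R$ of $G$, the quotient $G/R$ has no non-trivial normal prosoluble subgroup, so each of its minimal closed normal subgroups is a Cartesian power of a finite simple group; the centraliser obstruction of the previous paragraph (a non-trivial element supported in one factor centralises the others) forces a single such factor, so the non-soluble behaviour of $G$ is organised around one almost-simple pattern. I would then split according to whether the non-abelian simple sections of $G$ reduce to finitely many isomorphism types or form an unbounded tower in a fixed characteristic $p$. In the latter case the rank-one sections interact with the kernel of the map onto the stable almost-simple quotient in such a way that this kernel is pro-$p$ — exactly the mechanism behind $\mathrm{SL}(2,\mathbb{Z}_p)$ — placing $G$ in the virtually pro-$p$ alternative. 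In the former case the non-soluble part is confined to finite index; intersecting the relevant open normal subgroups yields an open prosoluble $\textup{CA}_2$-subgroup, and \cref{lem:casolo} bounds the derived length of its finite soluble images by $4$, giving an open soluble subgroup of derived length at most $4$.

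The main obstacle I anticipate is precisely that the $\textup{CA}_2$ property is not inherited by quotients, so the clean almost-simple description of finite $\textup{CA}_2$-groups cannot be applied verbatim to the finite quotients $G/N$. Overcoming this requires working throughout with closed subgroups and preimages, where the property does survive, rather than with quotients, and then controlling how the single almost-simple block varies across the inverse system so as to extract a genuinely \emph{open} pro-$p$ or prosoluble subgroup rather than merely a coherent family of finite data. The most delicate point is the emergence of the pro-$p$ alternative: one must show that an unbounded family of rank-one simple sections in a fixed characteristic $p$ forces the relevant kernel to be pro-$p$, and this is where the precise centraliser computations in $\mathrm{PSL}(2,q)$ and $\mathrm{Sz}(q)$ — in particular the behaviour of unipotent elements acting on that kernel — do the decisive work.
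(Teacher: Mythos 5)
There is a genuine gap, and it sits exactly where you flag ``the main obstacle'': your argument repeatedly applies the $\textup{CA}_2$ hypothesis to \emph{quotients} after correctly observing that the property does not pass to quotients. The structural analysis of $G/R$ (minimal normal subgroups are powers of a simple group, the ``centraliser obstruction'' forcing a single factor, the almost-simple pattern) all requires centralisers \emph{in the quotient} to be metabelian, which you have not established; saying one should ``work with closed subgroups and preimages'' names the difficulty without resolving it. This is precisely the content of the paper's Lemma~\ref{unknown}: for a pronilpotent-by-semisimple $\textup{CA}_2$-group it manufactures dihedral subgroups in the \emph{preimages} of the simple factors (via Feit--Thompson and Baer--Suzuki, with no appeal to the classification), takes a minimal supplement $D_J$ with $D_J\cap N\le\Phi(D_J)$ to control $\pi(D_J)$, uses the coprime-centraliser Lemma~\ref{known} to transfer information downstairs, and derives a contradiction from the action on a normal abelian pro-$q$ subgroup. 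None of that machinery, or a substitute for it, appears in your proposal. The same problem infects your endgame: Casolo--Jabara's derived-length bound applies to finite soluble $\textup{CA}_2$-groups, but the finite soluble images of your open prosoluble subgroup need not be $\textup{CA}_2$-groups, so you cannot conclude derived length at most $4$ that way. The paper instead proves the bound directly for soluble profinite $\textup{CA}_2$-groups (Theorem~\ref{thm:length-4}) using Hall subgroups, the coprime action Lemma~\ref{bbaa-generalised} and It\^o's theorem. (Also note that Lemma~\ref{lem:casolo} as stated bounds Fitting height, not derived length.)

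Two further points would need real work even granting the above. First, you give no reason why the tower of semisimple sections terminates; a priori $G/R$ could itself have an infinite iterated semisimple structure. The paper caps this by bounding the Fitting height of prosoluble subgroups (Lemma~\ref{h(G)}), invoking Khukhro--Shumyatsky to bound the non-soluble length of finite quotients, and then Wilson's lemmas to obtain a \emph{finite} normal series with prosoluble or semisimple factors, on whose length one can induct. Second, your pro-$p$ alternative is asserted to arise from an unbounded tower of rank-one simple sections in fixed characteristic, \`a la $\mathrm{SL}(2,\mathbb{Z}_p)$; this is both unproven and, in the paper's architecture, not where the pro-$p$ case comes from (it comes from the prosoluble analysis, Theorem~\ref{thm:prosoluble-with-metabelian-centr}, while the semisimple-on-top situation is shown to collapse to a finite amount of non-solubility). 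Your reduction of finite non-soluble $\textup{CA}_2$-groups to almost simple groups also tacitly assumes $E(H)\neq 1$, and the subsequent classification of simple $\textup{CA}_2$-groups via CFSG is left entirely to the reader; the paper's route avoids CFSG altogether.
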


The bound 4 for the above derived length is a consequence of the approach taken in the proof, and it remains to be seen whether this bound is sharp. Note that,  coincidentally, it was shown in~\cite[Theorem 2]{casolo} that a finite soluble $\text{CA}_2$-group has derived length at most 4 and it was  demonstrated in~\cite[Example B]{casolo} that the bound is sharp. We comment also that our method of bounding the derived length by 4 is completely different from that of~ \cite[Example B]{casolo}. 

Our method also naturally extends to give the more general result for prosoluble groups with soluble centralisers:
\begin{theorem}\label{thm:prosoluble-with-large-derived-length}
    Let $G$ be a prosoluble $\textup{CA}_d$-group, for some positive integer $d$. Then $G$ is either  soluble or virtually pro-$p$.
\end{theorem}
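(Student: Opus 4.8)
The plan is to prove Theorem \ref{thm:prosoluble-with-large-derived-length} by reducing it to the machinery already developed for the $\textup{CA}_2$ case and leveraging the finite result of Casolo--Jabara (Lemma \ref{lem:casolo}). First I would observe that the $\textup{CA}_d$ hypothesis is inherited by every quotient and by every closed subgroup, and that it passes to the finite continuous images $G/N$ as $N$ ranges over the open normal subgroups. Since $G$ is prosoluble, each such $G/N$ is a finite soluble $\textup{CA}_d$-group, so by Lemma \ref{lem:casolo} its derived length is bounded by some function $f(d)$ depending only on $d$, \emph{provided} the quotient genuinely inherits the $\textup{CA}_d$ property. The subtle point here is that centralisers do not behave well under taking quotients: the image in $G/N$ of the centraliser $C_G(x)$ need not equal the centraliser of the image of $x$. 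So the first real task is to control how centraliser conditions descend to finite quotients, and this is where I expect the argument to require care.

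Assuming this descent can be arranged, the strategy splits according to a dichotomy. If the derived lengths of the finite quotients $G/N$ are \emph{uniformly} bounded by some constant, then by taking the inverse limit the derived length of $G$ itself is bounded, so $G$ is soluble and we are in the first alternative of the conclusion. The heart of the theorem is therefore the contrapositive: I would suppose that $G$ is \emph{not} soluble, so that the derived lengths of the finite quotients are unbounded, and aim to deduce that $G$ is virtually pro-$p$. The natural mechanism is to show that unbounded derived length, in the presence of the $\textup{CA}_d$ condition, forces the existence of a large $p$-group structure: concretely, one expects to produce a nontrivial element $x$ whose centraliser $C_G(x)$, being soluble of bounded derived length, constrains the group so severely that $G$ must have an open pro-$p$ subgroup. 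This is the step that should parallel the dichotomy ``virtually pro-$p$ or virtually soluble'' appearing in Theorem \ref{thm:main}, and I would try to import its key structural lemmas directly.

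The main obstacle, and the crux of the whole proof, will be ruling out the ``mixed'' behaviour in which $G$ is neither soluble nor virtually pro-$p$. Here I would argue by contradiction: if $G$ is not virtually pro-$p$, then by a standard structure theory for prosoluble groups one can find, after passing to an open subgroup and a quotient, a nonabelian finite section involving two distinct primes, and more specifically a coprime action of a nontrivial group on an elementary abelian group. The $\textup{CA}_d$ condition then bounds the centralisers of the acting elements, and the theory of coprime actions (as referenced via \cite{Khukhro}) should let me bound the derived length of the whole section in terms of $d$ alone. Iterating or combining these local bounds across all finite quotients would contradict the assumed unboundedness of the derived lengths, forcing the solubility of $G$ and thereby completing the dichotomy. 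The technical heart is thus to make the coprime-action bound uniform across the profinite structure, so that the passage from ``every non-virtually-pro-$p$ finite section has bounded derived length'' to ``$G$ is soluble'' goes through without losing control of the constant; carrying out this uniformity, together with the earlier descent of the centraliser condition to finite quotients, is where the genuine work lies.
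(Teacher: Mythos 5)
Your plan has a genuine gap at its foundation: the $\textup{CA}_d$ condition does \emph{not} pass to finite quotients, and --- crucially --- this failure cannot be repaired in the way your strategy needs. You flag the issue yourself (``centralisers do not behave well under taking quotients'') but then proceed ``assuming this descent can be arranged.'' It cannot: a free pro-$p$ group is a $\textup{CA}_1$-group (centralisers of non-trivial elements are procyclic), yet its finite quotients include all finite $p$-groups, whose derived lengths are unbounded and which are certainly not all CA-groups. This is exactly why ``virtually pro-$p$'' appears as an alternative in the conclusion rather than being absorbed into solubility. So the proposed first step --- ``each $G/N$ is a finite soluble $\textup{CA}_d$-group, hence has derived length at most $f(d)$ by Lemma~\ref{lem:casolo}'' --- is false as stated (note also that Lemma~\ref{lem:casolo} bounds the Fitting height, not the derived length), and the dichotomy you build on it collapses. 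Your second and third paragraphs then describe the desired conclusion (``one expects to produce an element $x$ whose centraliser constrains the group so severely that $G$ must have an open pro-$p$ subgroup'') without supplying any mechanism; the coprime-action argument sketched at the end is not carried out, and it is precisely there that the theorem lives.

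For comparison, the paper's actual route avoids finite quotients entirely and works with the Fitting height $h(G)$ rather than the derived length. It first proves (Lemma~\ref{h(G)}) that a prosoluble $\textup{CA}_d$-group has $h(G)\le 8d+2$, using Sylow systems and system normalisers together with Lemma~\ref{lem:jaba} and the Fitting-height bound of Lemma~\ref{lem:casolo}; the key coprime-action input is Lemma~\ref{rrrr}, which produces an open subgroup of the acting group with non-trivial fixed points and hence a soluble Hall subgroup of derived length at most $d$. It then proves the dichotomy by induction on $h(G)$ (Theorem~\ref{thm:prosoluble-with-metabelian-centr}), decomposing $G=T_1T_2\cdots T_h$ into pronilpotent system normalisers and analysing the consecutive products $T_iT_{i+1}$, each of Fitting height $2$: such a product is either soluble or virtually pro-$p$, and in the latter case one passes to an open subgroup of smaller Fitting height. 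If you want to salvage your approach, the place to start is to replace ``derived length of finite quotients'' with ``Fitting height of $G$'' as the inductive invariant, and to replace the quotient-descent step with the subgroup-level coprime-action argument of Lemma~\ref{rrrr} (centralisers behave well when restricted to subgroups, unlike when pushed to quotients).
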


We leave it as a question, as to whether the above result can be  correspondingly generalised to profinite $\textup{CA}_d$-groups;  that is, whether such a profinite group is either virtually pro-$p$ or virtually soluble. Note that a pro-$p$ $\textup{CA}_d$-group may not be soluble. For example,  a free pro-$p$ group~$F$ is a $\textup{CA}_d$-group since the centraliser of a  non-trivial element is procyclic.

\medskip

\noindent \textit{Notation.}  All subgroups of a profinite group are assumed to be closed and all automorphisms of a profinite group are assumed to be continuous. For a profinite group~$G$, we denote its order  by $|G|$ and we denote by $\pi(G)$ the union of prime divisors of the orders of finite quotients of $G$; these are  defined for example in \cite[Definition~2.1.1 and Section~2.3]{ProfiniteGroups} or \cite[Section~2.3]{rz}. In general, our notation is standard, following for example \cite{ProfiniteGroups} or \cite{rz}.

\medskip

\noindent \textit{Organisation.} In Section~\ref{sec:prelim} we collect together some preliminaries. In Section~\ref{sec:prosoluble} we prove Theorem~\ref{thm:prosoluble-with-large-derived-length} and set up the scene to prove Theorem~\ref{thm:main}, the proof of which we complete in Section~\ref{sec:main}.

\section{Preliminaries}\label{sec:prelim}

 First we list some standard facts about coprime actions, which are immediate from the corresponding facts for finite groups; see for instance \cite[Lemmas 4.28 and 4.29]{Isaacs} and \cite[Theorem 6.2.2(iv)]{Gorenstein}.
\begin{lemma}\label{lem:standard}
    Let $A$ be a profinite group acting by automorphisms on a profinite group~$G$ and suppose that $(|G|,|A|)=1$. Then
    \begin{enumerate}
        \item [\textup{(i)}] $G=[G,A]C_G(A)$;
          \item [\textup{(ii)}] $[G,A,A]=[G,A]$;
            \item [\textup{(iii)}] $C_{G/N}(A)=NC_G(A)/N$ for any $A$-invariant normal subgroup $N$ of $G$.
    \end{enumerate}
\end{lemma}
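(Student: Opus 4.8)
The plan is to deduce each statement from its finite-group analogue by an inverse-limit argument, exactly as the remark preceding the lemma suggests. The first preliminary step is to record that the family of $A$-invariant open normal subgroups of $G$ forms a base of neighbourhoods of the identity: given any open normal subgroup $M$, continuity of the action makes the stabiliser of $M$ in $A$ open, so the $A$-orbit of $M$ is finite and $\bigcap_{a\in A} M^a$ is an $A$-invariant open normal subgroup contained in $M$. Consequently $G=\varprojlim_N G/N$ over this family. For each such $N$, the group $A$ acts on the finite group $G/N$ through the finite quotient $\bar A=A/C_A(G/N)$ (whose kernel is open by continuity), and since $|G/N|$ divides $|G|$ and $|\bar A|$ divides $|A|$ as Steinitz numbers, the hypothesis $(|G|,|A|)=1$ yields $(|G/N|,|\bar A|)=1$. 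Thus the finite coprime-action facts of \cite{Isaacs} and \cite{Gorenstein} apply verbatim to the pair $(G/N,\bar A)$, and throughout I would use the identities $[G/N,A]=[G,A]N/N$ and $[G/N,A,A]=[G,A,A]N/N$.

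I would prove (iii) first, since (i) depends on it. Only the inclusion $C_{G/N}(A)\subseteq NC_G(A)/N$ requires an argument; that is, the projection $C_G(A)\to C_{G/N}(A)$ is onto. Fix $\bar g\in C_{G/N}(A)$ with a preimage $g$, so that the coset $gN$ is $A$-invariant. For every $A$-invariant open normal subgroup $M\subseteq N$, the finite version of (iii) applied to $G/M$ and the $A$-invariant subgroup $N/M$ shows that $\bar g$ has an $A$-fixed preimage in $G/M$, whence the closed set $Y_M=\{y\in gN: yM\in C_{G/M}(A)\}$ is non-empty. These sets are nested under refinement of $M$, hence enjoy the finite intersection property, and compactness of the closed subset $gN$ of $G$ yields a point $y\in\bigcap_M Y_M$. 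Such a $y$ is fixed by $A$ in every finite quotient, so $y\in C_G(A)$, while $yN=\bar g$; this gives (iii).

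With (iii) in hand, (i) and (ii) follow by testing on finite quotients. For (i), observe that $[G,A]$ is a closed normal $A$-invariant subgroup and $C_G(A)$ is closed, so $[G,A]\,C_G(A)$ is a closed subgroup of $G$; its image in each $G/N$ equals $[G/N,A]\cdot(C_G(A)N/N)=[G/N,A]\,C_{G/N}(A)=G/N$, using (iii) together with the finite statement (i). A closed subgroup surjecting onto every finite quotient must be the whole group, so $G=[G,A]C_G(A)$. For (ii), both $[G,A,A]$ and $[G,A]$ are closed, and their images in each $G/N$ are $[G/N,A,A]$ and $[G/N,A]$, which coincide by the finite statement (ii); hence the two closed subgroups agree.

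The only genuinely non-formal step is the surjectivity asserted in (iii): in the finite setting it is a direct consequence of coprime action, but over a profinite group one must first secure an $A$-fixed preimage in every finite layer and then glue these using compactness. I expect this compactness argument via the finite intersection property, rather than any group-theoretic subtlety, to be the point needing the most care.
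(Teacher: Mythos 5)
The paper offers no proof of this lemma at all --- it simply cites the finite-group statements and declares the profinite versions immediate --- so your inverse-limit reduction is precisely the intended route, and your treatments of (i) and (ii), as well as the preliminary observation that the $A$-invariant open normal subgroups form a base of neighbourhoods of the identity, are correct.

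There is, however, one genuine slip in your argument for (iii). You run the compactness argument over the $A$-invariant open normal subgroups $M$ of $G$ with $M\subseteq N$. But the paper's convention is only that $N$ is closed; if $N$ is not open, then no open subgroup of $G$ is contained in $N$, the family of sets $Y_M$ you consider is empty, and the argument establishes nothing. The repair is standard and small: let $M$ range over \emph{all} $A$-invariant open normal subgroups of $G$, and apply the finite version of (iii) to $G/M$ with the $A$-invariant normal subgroup $NM/M$. Since the coset $gNM$ is $A$-invariant, this produces some $c\in gNM$ with $cM\in C_{G/M}(A)$; writing $c=gnm$ with $n\in N$, $m\in M$, the element $gn$ lies in $Y_M=\{y\in gN:\, yM\in C_{G/M}(A)\}$, so each $Y_M$ is a non-empty closed subset of the compact set $gN$. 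These sets are downward directed because the $A$-invariant open normal subgroups are closed under finite intersection, and your compactness argument then goes through verbatim to produce $y\in gN\cap C_G(A)$. With this adjustment the proof is complete; note also that in the body of the paper part (iii) is only ever invoked with $N$ open, in which case your original formulation already suffices.
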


The following well-known lemma will be used often  throughout the sequel:
\begin{lemma}
   \label{lem:*}
   Let $G$ be a profinite group and $H$ a closed subgroup. Let $K$ be an open subgroup of $H$. Then $G$ has an open subgroup $L$ such that $L\cap H\leq K$. 
\end{lemma}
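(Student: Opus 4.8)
The plan is to argue purely topologically, exploiting the fact that $K$ being open in $H$ means exactly that $K$ is a neighbourhood of the identity in the topology $H$ inherits from $G$. Since $G$ is profinite, its open normal subgroups form a fundamental system of neighbourhoods of $1$ in $G$; intersecting them with $H$ yields a fundamental system of neighbourhoods of $1$ for the subspace topology on $H$, and this subspace topology is precisely the profinite topology of the closed subgroup $H$. So the whole content of the lemma is the translation of ``open subgroup'' into ``contains a basic neighbourhood of the identity''.

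The key step is then immediate. Because $K$ is an open subgroup of $H$ it contains $1$ and is a neighbourhood of the identity in $H$, so by the previous observation there is an open normal subgroup $N$ of $G$ with $N\cap H\subseteq K$. Taking $L=N$ gives an open subgroup of $G$ with $L\cap H\leq K$, which is exactly what is asked. Note that we do not need $L\cap H=K$, only the inclusion, so no delicate matching of cosets is required and any sufficiently small $L$ works.

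I expect essentially no serious obstacle here; the only point deserving care is the justification that the traces of the open normal subgroups of $G$ genuinely form a neighbourhood basis at $1$ in $H$, i.e.\ that the induced topology on the closed subgroup $H$ is its profinite topology. Should one prefer to avoid quoting this, a self-contained compactness argument is available: since $K$ is open in $H$, the complement $H\setminus K$ is closed in the compact group $H$, hence compact, and it does not contain $1$. For each $x\in H\setminus K$ pick, using the residual finiteness of the profinite group $G$, an open normal subgroup $N_x$ of $G$ with $x\notin N_x$; the open sets $G\setminus N_x$ cover the compact set $H\setminus K$, so finitely many $N_{x_1},\dots,N_{x_n}$ suffice, and then $L=N_{x_1}\cap\cdots\cap N_{x_n}$ is an open subgroup of $G$ disjoint from $H\setminus K$, whence $L\cap H\leq K$. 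I would present the short neighbourhood-basis version in the main text and regard the compactness version merely as an alternative unpacking of the same idea.
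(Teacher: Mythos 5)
Your argument is correct; the paper itself states this lemma as well-known and gives no proof, so there is nothing to compare against. Both of your versions work: the neighbourhood-basis formulation is the cleanest (the traces $N\cap H$ of open normal subgroups $N\trianglelefteq_o G$ do form a basis at $1$ for the subspace topology on $H$, since any open $U\ni 1$ in $H$ is $V\cap H$ for some open $V\ni 1$ in $G$, which contains some such $N$), and the compactness unpacking is a valid self-contained substitute. Either would serve as the omitted proof.
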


\begin{lemma}\label{known} Let $N$ be a normal subgroup of a profinite group $G$, and let $x\in G$ be an element such that $(|x|,|N|)=1$. Then we have $C_{G/N}(x)=C_G(x)N/N$.
\end{lemma}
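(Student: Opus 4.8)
The plan is to prove the two inclusions separately. The inclusion $C_G(x)N/N\subseteq C_{G/N}(\bar x)$, where $\bar{\phantom{x}}$ denotes images in $G/N$, is immediate: if $g\in C_G(x)$ then $\bar g$ commutes with $\bar x$. The content lies in the reverse inclusion. Writing out $\bar g\in C_{G/N}(\bar x)$, the relation $\bar g\bar x=\bar x\bar g$ is equivalent to $x^gN=xN$, that is, the conjugate $x^g$ lies in the same coset of $N$ as $x$. Thus the statement reduces to the following claim: whenever $x^g\in xN$, there exists $n\in N$ with $x^n=x^g$; for then $gn^{-1}\in C_G(x)$ and hence $g\in C_G(x)N$, giving $\bar g\in C_G(x)N/N$.

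To establish this claim I would pass to the closed subgroup $M=\overline{\langle x\rangle}\,N$; it is a subgroup because $N$ is normal, and it is closed as the continuous image of the compact space $\overline{\langle x\rangle}\times N$. Set $A=\overline{\langle x\rangle}$. The hypothesis $(|x|,|N|)=1$ means precisely that $(|A|,|N|)=1$, so $A\cap N=1$ and $M=N\rtimes A$; in particular $N$ is a normal Hall subgroup of $M$ with complement $A$. Since $x^g$ is conjugate to $x$ it has the same order, so that $(|\overline{\langle x^g\rangle}|,|N|)=1$, and from $x^g\in xN\subseteq M$ one checks that $\overline{\langle x^g\rangle}N=M$ and $\overline{\langle x^g\rangle}\cap N=1$. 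Hence $\overline{\langle x^g\rangle}$ is a second complement to $N$ in $M$.

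At this point I would invoke the profinite Schur--Zassenhaus theorem in its coprime form (available in the standard references, e.g. \cite{rz,ProfiniteGroups}), which guarantees that any two complements to a normal Hall subgroup of a profinite group are conjugate by an element of that subgroup. This yields $n\in N$ with $A^n=\overline{\langle x^g\rangle}$. To upgrade the conjugacy of the procyclic groups to the exact equality $x^n=x^g$, I would use that a complement maps isomorphically onto $M/N$ under the quotient map, so each coset of $N$ in $M$ contains exactly one element of a given complement. Both $x^n$ and $x^g$ lie in the complement $\overline{\langle x^g\rangle}$ and in the coset $xN$, whence $x^n=x^g$, completing the reduction and the proof.

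The main obstacle is purely the profinite bookkeeping: one must confirm that coprimality of the supernatural orders genuinely gives $A\cap N=1$ and that the profinite Schur--Zassenhaus conjugacy applies verbatim to closed subgroups of coprime supernatural order, rather than merely to finite groups. Once these points are secured, the argument is a faithful translation of the classical finite-group proof, and no inverse-limit approximation of the centralisers is required.
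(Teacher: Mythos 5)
Your proof is correct, and it rests on exactly the same engine as the paper's: the profinite Schur--Zassenhaus theorem giving conjugacy (by an element of $N$) of the coprime complements $\overline{\langle x\rangle}$ and $\overline{\langle x^g\rangle}$ inside $\overline{\langle x\rangle}N$. The paper merely packages the same idea as a Frattini argument applied to the normal subgroup $N\langle x\rangle$ of the preimage of $C_{G/N}(x)$, together with the observation that $N_T(\langle x\rangle)=C_T(x)$, whereas you conjugate the complements directly and pin down $x^n=x^g$ via uniqueness of the complement's representative in each coset; the two are equivalent in substance.
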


\begin{proof} Let $T$ be the inverse image of $C_{G/N}(x)$ and $U :=C_G(x)N$. We need to show that $T=U$.  Obviously $U\leq T$ so it  suffices to establish that $T\leq U$.  To this end, observe that an element $y$ of $T$ normalises $\langle x\rangle$ if and only if $y$ centralises $x$. In other words,  we have $N_T(\langle x\rangle)=C_T(x)$. Let $\pi :=\pi(N)$. Note that $N\langle x\rangle$ is a normal subgroup of $T$ and  that $\langle x\rangle$ is a maximal $\pi'$-subgroup of $N\langle x\rangle$. By the Schur-Zassenhaus theorem the maximal $\pi'$-subgroups are conjugate in $N\langle x\rangle$. Thus, by the Frattini argument,  we obtain $T=N\langle x\rangle N_T(\langle x\rangle)$ and so $T=NC_T(x)$. It follows that $T\leq U$, as required.
\end{proof}

\begin{lemma}\label{rrrr} Let $G=BA$ be a profinite group, which is a product of a normal subgroup~$B$ and a subgroup $A$ such that $(|A|,|B|)=1$. Then $A$ has an open subgroup~$A_0$ such that $C_B(A_0)\neq1$. Hence, if $G$ is a $\textup{CA}_d$-group, then $A$ is virtually soluble of derived length at most $d$.  If furthermore $A$ is infinite, then $B$ is soluble of derived length at most~$d$.
\end{lemma}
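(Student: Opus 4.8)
The plan is to treat the three assertions in turn, the engine throughout being the coprime-action fixed-point formula of \cref{lem:standard}(iii). I may assume $B\neq 1$, since otherwise the first assertion is vacuous. First I would produce a suitable open subgroup $A_0$ of $A$ with non-trivial fixed points on $B$. Since $B\neq 1$ is profinite, choose an open normal subgroup $M\lneq B$; as $B\trianglelefteq G$, conjugation by elements of $A$ permutes the open normal subgroups of $B$ of index $|B:M|$, of which there are only finitely many, so $N:=\bigcap_{a\in A}M^{a}$ is an $A$-invariant open normal subgroup of $B$ with $N\neq B$. The action of $A$ on the finite group $B/N$ is continuous, so its kernel $A_0:=C_A(B/N)$ is open in $A$. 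By construction $A_0$ acts trivially on $B/N$, that is, $C_{B/N}(A_0)=B/N$; since $(|A_0|,|B|)=1$, \cref{lem:standard}(iii) yields $C_{B/N}(A_0)=NC_B(A_0)/N$, whence $NC_B(A_0)=B\neq N$ and therefore $C_B(A_0)\neq 1$. This settles the first assertion.

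For the second assertion, suppose $G$ is a $\textup{CA}_d$-group and pick a non-trivial $b\in C_B(A_0)$. Then $A_0\leq C_G(b)$, and since $b\neq 1$ the centraliser $C_G(b)$ is soluble of derived length at most $d$; hence so is its subgroup $A_0$. As $A_0$ is open in $A$, the group $A$ is virtually soluble of derived length at most $d$.

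For the third assertion I would assume in addition that $A$ is infinite, so that the subgroup $A_0=C_A(B/N)$ constructed above is infinite as well, and I would run the same construction for every $A$-invariant open normal subgroup $N\lneq B$. Fixing such an $N$ and choosing a non-trivial $a\in A_0$, the chain $C_B(A_0)\leq C_B(a)\leq C_G(a)$ shows that $C_B(A_0)$ is soluble of derived length at most $d$. On the other hand the identity $NC_B(A_0)=B$ exhibits $B/N\cong C_B(A_0)/(C_B(A_0)\cap N)$ as a quotient of $C_B(A_0)$, hence also soluble of derived length at most $d$. Since the $A$-invariant open normal subgroups $N\lneq B$ form a base of neighbourhoods of the identity in $B$, I conclude $B^{(d)}\leq\bigcap_N N=1$, so $B$ is soluble of derived length at most $d$.

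I expect the first assertion to be the crux: the real content is the passage from fixed points on the finite quotient $B/N$ to fixed points in $B$ itself, which is exactly what the coprime formula $C_{B/N}(A_0)=NC_B(A_0)/N$ delivers. The same identity, read as a surjection of $C_B(A_0)$ onto $B/N$, is what transfers the derived-length bound from the fixed-point subgroup to every finite quotient of $B$ in the third part; the only care needed there is to ensure $A_0$ is infinite, which is automatic once $A$ is, as $A_0$ has finite index in $A$.
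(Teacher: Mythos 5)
Your argument follows essentially the same route as the paper's: pass to a finite quotient $B/N$ on which an open subgroup $A_0$ of $A$ acts trivially, pull the fixed points back with the coprime formula $B=NC_B(A_0)$ from Lemma~\ref{lem:standard}(iii), and then read that identity twice — once to place $A_0$ inside a centraliser, once to exhibit $B/N$ as a quotient of $C_B(A_0)$ when $A_0\neq 1$. The second and third assertions are handled exactly as in the paper, including the correct observation that $A$ infinite forces $A_0\neq1$. The one step that does not hold as written is your justification that $N=\bigcap_{a\in A}M^{a}$ is open: a profinite group need not have only finitely many open normal subgroups of a given index (for example $\prod_{i\in\mathbb{N}}C_p$ has infinitely many open subgroups of index $p$), so you cannot deduce finiteness of the $A$-orbit of $M$ from that count. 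The conclusion is still true — the stabiliser of $M$ under the conjugation action of $G$ on $B$ is open, as one sees by covering the compact set $M$ by finitely many basic open boxes inside the preimage of $M$ under the continuous map $(g,b)\mapsto b^{g}$, whence the orbit of $M$ is finite and the intersection is open — but a cleaner repair, and the one the paper implicitly uses, is to skip the construction altogether: since $B$ is a non-trivial closed normal subgroup of $G$, choosing an open normal subgroup $U$ of $G$ with $B\not\leq U$ and setting $N:=B\cap U$ immediately gives an open proper subgroup of $B$ that is normal in $G$, hence $A$-invariant; such subgroups also form the base of neighbourhoods of the identity needed in your third step. With that substitution your proof is correct and coincides with the paper's.
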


\begin{proof} 
We proceed as in  \cite[Proof of Lemma 3.1(1)]{ccm}.  Let $N$ be any open proper subgroup of $B$ that is also normal in $G$. As the action of $A$ induces a finite group of automorphisms of $B/N$, there is an open subgroup $A_0$ of $A$ that acts trivially on $B/N$. By Lemma~\ref{lem:standard}(iii), we have $B=NC_B(A_0)$. As $A_0$ centralises a non-trivial element in $B$, we obtain that $A_0$ is soluble of derived length at most $d$. Also if $A$ is infinite then from $B=NC_B(A_0)$ we see that  $B/N$ is soluble of derived length at most $d$. As this holds for every open subgroup of $B$ that is normal in $G$, we have that $B$ is soluble of derived length at most $d$. 
\end{proof}

 For a profinite group~$G$, let $\pi$ be a subset of $\pi(G)$. Recall that the unique maximal normal pro-$\pi$ subgroup of~$G$ is denoted by $\mathcal{O}_\pi(G)$, and we denote by $\mathcal{O}_{\pi,\pi'}(G)$ the subgroup of $G$ such that $\mathcal{O}_{\pi,\pi'}(G)/\mathcal{O}_{\pi}(G)=\mathcal{O}_{\pi'}(G/\mathcal{O}_{\pi}(G))$; cf. \cite[Section 9.1]{Robinson}. Recall  also the following well-known facts, where we include proofs for the reader's convenience.

\begin{lemma}\label{lem:soluble-centraliser}
Let $G$ be a soluble profinite group  and let $\pi\subseteq \pi(G)$. Then  the following hold:
\begin{enumerate}
    \item [\textup{(i)}] if $\mathcal{O}_{\pi'}(G)=1$, then $C_G(\mathcal{O}_{\pi}(G))\leq \mathcal{O}_{\pi}(G)$;
    \item [\textup{(ii)}] if $G$ has an abelian $\pi$-Hall subgroup~$H$, then $H\leq \mathcal{O}_{\pi',\pi}(G)$.
\end{enumerate}
\end{lemma}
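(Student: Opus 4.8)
The plan is to adapt the classical finite-group argument (a Hall--Higman-type fact) to the profinite setting, the only real change being that the absence of minimal normal subgroups is circumvented by using the last non-trivial term of the derived series, which terminates because $G$ is soluble. For part (i), I would write $P=\mathcal{O}_\pi(G)$ and $C=C_G(P)$, a closed normal subgroup of $G$. First I would record that $Z:=C\cap P$ is a pro-$\pi$ subgroup lying in the centre of $C$, and that $\mathcal{O}_{\pi'}(C)=1$: indeed $\mathcal{O}_{\pi'}(C)$ is characteristic in $C$, hence normal in $G$, and being pro-$\pi'$ it lies in $\mathcal{O}_{\pi'}(G)=1$. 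Assuming for contradiction that $C\not\leq P$, i.e.\ $C\neq Z$, the quotient $\overline{C}:=C/Z$ is a non-trivial soluble profinite group, so the last non-trivial term $A$ of its (closed) derived series is a non-trivial abelian subgroup characteristic in $\overline{C}$; since $Z\trianglelefteq G$ and $\overline{C}\trianglelefteq G/Z$, the preimage of $A$ in $G$ is normal in $G$.

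Next I would split $A=A_\pi\times A_{\pi'}$ into its pro-$\pi$ and pro-$\pi'$ parts, both characteristic in $A$ and hence normal in $G/Z$, and treat the two cases separately. If $A_\pi\neq 1$, its preimage $L$ in $G$ is a normal pro-$\pi$ subgroup (an extension of the pro-$\pi$ group $Z$ by a pro-$\pi$ group), so $L\leq \mathcal{O}_\pi(G)=P$; but $L\leq C$ forces $L\leq C\cap P=Z$, contradicting that the image of $L$ in $\overline{C}$ is non-trivial. If instead $A_{\pi'}\neq 1$, its preimage $L$ is a central extension of a pro-$\pi'$ group by the central pro-$\pi$ group $Z$; by the Schur--Zassenhaus theorem the Hall $\pi'$-subgroup $Q$ of $L$ is a complement, and since $Z$ is central, $Q=\mathcal{O}_{\pi'}(L)$ is characteristic in $L$ and therefore normal in $G$, giving $1\neq Q\leq\mathcal{O}_{\pi'}(G)=1$, again a contradiction. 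Hence $C=Z\leq P$, proving part (i).

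For part (ii), I would reduce to the situation of part (i) by passing to $\overline{G}:=G/\mathcal{O}_{\pi'}(G)$, in which $\mathcal{O}_{\pi'}(\overline{G})=1$ and $\mathcal{O}_\pi(\overline{G})=\mathcal{O}_{\pi',\pi}(G)/\mathcal{O}_{\pi'}(G)$ by definition. The image $\overline{H}$ of $H$ is an abelian Hall $\pi$-subgroup of $\overline{G}$. Now $\mathcal{O}_\pi(\overline{G})$ is a normal pro-$\pi$ subgroup of $\overline{G}$, hence contained in the Hall $\pi$-subgroup $\overline{H}$; since $\overline{H}$ is abelian this yields $\overline{H}\leq C_{\overline{G}}(\mathcal{O}_\pi(\overline{G}))$, and part (i) applied to $\overline{G}$ gives $C_{\overline{G}}(\mathcal{O}_\pi(\overline{G}))\leq \mathcal{O}_\pi(\overline{G})$. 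Thus $\overline{H}\leq \mathcal{O}_\pi(\overline{G})$, and pulling back along $G\to\overline{G}$ gives $H\leq \mathcal{O}_{\pi',\pi}(G)$, as required.

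The finite-group versions of both statements are standard, so the substance of the work is the profinite bookkeeping, and the point requiring most care is exactly the manufacture of the abelian normal subgroup $A$ in part (i): lacking minimal normal subgroups, I rely on the derived series terminating, which is where solubility of $G$ is essential. The remaining ingredients---existence and conjugacy of Hall subgroups in prosoluble groups together with the preservation of the Hall property under quotients, the Schur--Zassenhaus theorem, and the decomposition of an abelian profinite group as the direct product of its pro-$\pi$ and pro-$\pi'$ parts---are all available in the profinite category, so I expect no essential difficulty beyond assembling them correctly.
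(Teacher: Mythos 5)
Your proof is correct and takes essentially the same approach as the paper: in part (i) both arguments manufacture a non-trivial normal pro-$\pi'$ subgroup inside $C_G(\mathcal{O}_\pi(G))$ to contradict $\mathcal{O}_{\pi'}(G)=1$ (the paper by splitting $\mathcal{O}_{\pi,\pi'}(C)$ as a direct product over the central pro-$\pi$ part, you by descending the derived series and splitting its last term), and in part (ii) both reduce to (i) by passing to $G/\mathcal{O}_{\pi'}(G)$ and using that the abelian Hall subgroup centralises, hence equals, $\mathcal{O}_\pi$ of the quotient.
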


\begin{proof}
(i)  Let $M :=\mathcal{O}_{\pi}(G)$ and suppose $C:=C_G(M)\not\leq M$.  Let $N :=\mathcal{O}_{\pi,\pi'}(C)$. Then $N=\mathcal{O}_\pi(N)\times \mathcal{O}_{\pi'}(N)$,  and as $\mathcal{O}_{\pi'}(N)\ne 1$, we arrive at a contradiction to the assumption that $\mathcal{O}_{\pi'}(G)=1$.

(ii)    By taking the quotient if necessary, we may assume that $\mathcal{O}_{\pi'}(G)=1$. Let $M :=\mathcal{O}_{\pi}(G)$. We have $M\leq H$ and $C_G(M)\leq M$ by the previous part. Since $H$ is abelian, it follows that $M=H$. So  $H\leq \mathcal{O}_{\pi',\pi}(G)$.
\end{proof}

For a prosoluble group $G$, recall that the \emph{Fitting height} $h(G)$ is the length of a shortest series of normal subgroups of $G$ all of whose quotients are pronilpotent. Note that $h(G)$ is finite if and only if $G$ is an inverse limit of finite soluble groups of bounded Fitting height. We recall the following useful results for finite groups:

\begin{lemma}\cite[Theorem 1.1]{jaba}\label{lem:jaba}
Let $G=AB$ be a finite soluble group, where $A$ and $B$ are proper subgroups with $(|A|,|B|)=1$. Then $h(G)\le h(A)+h(B)+4d(B)-1$, where $d(B)$ denotes the derived length of $B$.
\end{lemma}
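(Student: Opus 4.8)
Since this statement is quoted as \cite[Theorem~1.1]{jaba}, the strictly correct move is to cite that reference; nevertheless, let me describe the route I would take to prove such a bound independently. The starting observation is that $(|A|,|B|)=1$ together with $G=AB$ forces $A\cap B=1$ and hence $|G|=|A|\,|B|$, so $A$ and $B$ are Hall subgroups of $G$ for the complementary sets of primes $\pi:=\pi(A)$ and $\pi':=\pi(B)=\pi(G)\setminus\pi$. Consequently $A$ acts coprimely on every normal $\pi'$-subgroup of $G$ and $B$ acts coprimely on every normal $\pi$-subgroup. This coprimality is the feature that makes the whole argument run, since it lets one apply \cref{lem:standard} repeatedly and, more importantly, feed the $\pi$-sections into the Hall--Higman machinery below.

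The plan is an induction on $|G|$ organised around the Fitting subgroup $F:=F(G)$. First I would reduce to the case $\Phi(G)=1$ (using $h(G/\Phi(G))=h(G)$), so that $C_G(F)=F$ and $F$ is completely reducible; then I would split $F=F_\pi\times F_{\pi'}$ with $F_\pi=F\cap A$ and $F_{\pi'}=F\cap B$, both nilpotent and normal. Passing to $\bar G=G/F=\bar A\,\bar B$ lowers the Fitting height by exactly one and keeps the factorisation coprime, with $h(\bar A)\le h(A)$, $h(\bar B)\le h(B)$ and $d(\bar B)\le d(B)$, so the inductive hypothesis controls $h(\bar G)=h(G)-1$. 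The subtlety is that quotienting by $F$ need not drop $h(A)$ or $h(B)$, since the Fitting subgroup of $A$ can be strictly larger than $F_\pi$; a naive induction therefore loses one unit at each step and only yields $h(G)\le h(A)+h(B)+4d(B)$. Recovering the sharp constant $-1$ is what forces one to analyse the bottom layer by hand rather than through a black-box induction.

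The engine behind the term $4d(B)$ is a Hall--Higman-type estimate, fed through the interleaving inequality $h(G)\le \ell_\pi(G)+\ell_{\pi'}(G)$ that one gets by splitting each nilpotent factor of a Fitting series into its $\pi$- and $\pi'$-parts. When $B$, of derived length $d(B)$, acts coprimely and essentially faithfully on the $\pi$-sections of $G$, the number of extra nilpotent layers this action can create is bounded linearly in $d(B)$, the constant $4$ absorbing the prime-$2$ anomaly in the $p$-length bounds (where $\ell_2\le 2d$ rather than the odd-prime bound $\ell_p\le d$) accumulated across the two sides of the interleaving. I expect this Hall--Higman step --- obtaining the correct linear-in-$d(B)$ bound for $\pi$-separable groups with the exact constant, and dovetailing it with the Fitting-series bookkeeping so as not to forfeit the final $-1$ --- to be the main obstacle; the coprime-action reductions and the passage to $\Phi(G)=1$ are routine by comparison.
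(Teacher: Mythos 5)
The paper offers no proof of this lemma at all: it is imported verbatim from Casolo--Jabara--Spiga \cite{jaba}, so your decision to simply cite that reference is exactly the paper's own approach. Your supplementary sketch (coprime Hall factorisation, induction past $\Phi(G)$ and $F(G)$, and a Hall--Higman $p$-length estimate supplying the $4d(B)$ term) is a fair outline of the strategy in \cite{jaba}, but as you acknowledge it leaves the key quantitative step and the recovery of the $-1$ unproven, so the citation is what actually carries the statement.
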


\begin{lemma}\cite[Theorem 1]{casolo}\label{lem:casolo}
Let $G$ be a finite soluble $\textup{CA}_d$-group, for a positive integer $d$. Then $h(G)\le 3d+2$.
\end{lemma}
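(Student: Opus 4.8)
The plan is to establish the statement in contrapositive form: from a finite soluble group of large Fitting height I want to produce a non-trivial element whose centraliser has derived length exceeding $d$. The basic leverage is that for any finite soluble group $H$ the derived series is a normal series with abelian, hence nilpotent, factors, so $h(H)\le d(H)$; consequently the $\mathrm{CA}_d$ hypothesis gives $h(C_G(x))\le d(C_G(x))\le d$ for every $1\ne x\in G$. The whole game is therefore to show that a single, well-chosen centraliser must "see" a fixed positive fraction of the Fitting height of $G$, so that $h(G)$ large forces $h(C_G(x))$ large for some $x$, contradicting the displayed bound $h(C_G(x))\le d$ once $h(G)>3d+2$.

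Concretely, I would work along the Fitting series $1=F_0<F_1<\dots<F_h=G$, where $F_1=F(G)$ and $F_{i+1}/F_i=F(G/F_i)$, passing to $G/\mathcal{O}_{p'}(G)$ when convenient so that I may use the standard constraint $C_G(F(G))=Z(F(G))$ (cf. Lemma~\ref{lem:soluble-centraliser}(i)); this makes every layer act faithfully on the one below it. Fixing a prime $p$ with $O_p(G)\ne1$ and a non-trivial $z\in Z(O_p(G))$, one already has $F(G)\le C_G(z)$, contributing Fitting height $1$ to the centraliser. The core is then an inductive fixed-point transfer: assuming the layers up to $F_i$ have been arranged to lie inside $C_G(z)$, I would use coprime action (Lemma~\ref{lem:standard}, and in particular the decomposition $V=C_V(y)\times[V,y]$ on each chief factor $V$) to locate, a bounded number of layers higher, a further nilpotent piece that still centralises $z$ yet escapes the Fitting closure of what is already collected. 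Each such step raises $h(C_G(z))$ by one while climbing a bounded number of Fitting layers of $G$ — heuristically three, the two extra layers per step being exactly what the sharpest coprime fixed-point estimates of Turull type cost. Iterating and comparing with $h(C_G(z))\le d$ then yields $h(G)\le 3d+2$.

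The main obstacle is twofold. First, the class $\mathrm{CA}_d$ is not inherited by quotients, so one cannot simply induct on $G/F(G)$; the entire descent must be carried out inside $G$, keeping a single element $z$ fixed while its centraliser accumulates Fitting height. Second, and more seriously, I must guarantee that the successively located nilpotent pieces not only centralise $z$ but interact so as to force the \emph{derived length} of $C_G(z)$ to grow, rather than merely its order or its Fitting height in isolation. It is precisely in pinning down the per-step cost that the constant $3$ and the additive $2$ are fixed, and securing the exact coprime fixed-point bound that makes three layers suffice is the step I expect to be hardest; deciding whether this constant is optimal would presumably require the kind of explicit extremal construction used to show sharpness in the finite case.
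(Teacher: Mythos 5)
The paper offers no proof of this statement: it is imported verbatim from Casolo and Jabara \cite[Theorem 1]{casolo}, so there is no internal argument to compare your attempt against, and any proof must stand on its own.

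What you have written is a research plan rather than a proof, and the decisive step is exactly the one you leave open. The reduction $h(C_G(x))\le d(C_G(x))\le d$ is fine, and converting ``$h(G)$ large'' into ``$h(C_G(z))$ large for some $z$'' via coprime fixed-point theorems is the right family of tools (the relevant engine is Turull's bound $h(N)\le h(C_N(A))+2\ell(A)$ for coprime actions). But the claim that, starting from $z\in Z(\mathcal{O}_p(G))$, one can repeatedly locate ``a bounded number of layers higher'' a nilpotent piece that centralises $z$ and strictly increases the Fitting height of $C_G(z)$, at a cost of three layers of $G$ per step, \emph{is} the theorem, and you give no argument for it. Two concrete obstructions: (i) the factors of the Fitting series of $G$ are not $p'$-groups, so the coprime decomposition $V=C_V(z)\times[V,z]$ you invoke is unavailable on the $p$-parts of the chief factors, and nothing in the sketch explains how the $p$-local contribution to $h(G)$ is controlled; (ii) even where coprimality holds, $C_V(z)$ may be trivial on many consecutive layers unless $z$ is chosen with reference to the whole tower, and choosing $z$ after the fact destroys the induction you describe, which requires ``keeping a single element $z$ fixed.'' Note also that your argument, if it worked, would prove the stronger statement $h(G)\le 3\max_{x\ne 1}h(C_G(x))+2$, in which the hypothesis on the \emph{derived length} of centralisers plays no role beyond the trivial inequality $h\le d$; the actual proof in \cite{casolo} uses the derived-length hypothesis in an essential way, through the structure of factorised and Hall subgroups in the spirit of Lemma~\ref{lem:jaba}. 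You should either prove that stronger statement or identify where derived length genuinely enters. As it stands, the constants $3$ and $2$ are reverse-engineered from the answer rather than derived.
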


\smallskip

Next, recall that a \emph{Sylow system} in a  group $G$ is a collection $\{P_1,P_2,\ldots\}$ of $p_i$-Sylow subgroups $P_i$ of $G$, one for each prime $p_i$ and such that $P_iP_j=P_jP_i$ for all $i$ and $j$. Then the \emph{system  normaliser} of such a Sylow system in $G$ is 
\[
\bigcap_i N_G(P_i).
\]
If $G$ is a profinite group and if $T$ is a system normaliser in $G$, then $T$ is pronilpotent (cf. \cite[Theorem~9.2.4]{Robinson}) and $G=T\gamma_\infty(G)$, where $\gamma_\infty(G)$ is the intersection of the terms of the lower central series of $G$. By \cite[Proposition 2.3.9]{rz}, every prosoluble group $G$ possesses a Sylow system and any two system normalisers in $G$ are conjugate (cf. also \cite[Theorem~9.2.4]{Robinson}).

\section{Prosoluble groups with soluble centralisers}\label{sec:prosoluble}

Here we prove Theorem~\ref{thm:prosoluble-with-large-derived-length} and establish other useful results for the next section.
\begin{theorem}\label{thm:prosoluble-with-metabelian-centr}
    Let $G$ be a prosoluble $\textup{CA}_d$-group for a positive integer $d$, such that $h(G)<\infty$.      Then $G$ is either soluble or virtually pro-$p$.
\end{theorem}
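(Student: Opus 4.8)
The plan is to establish the dichotomy by assuming that $G$ is not virtually pro-$p$ for any prime $p$ and deducing that $G$ is soluble, with derived length bounded in terms of $d$ and $h:=h(G)$; I would argue by induction on $h$, using the Fitting series $1=F_0\le F_1\le\cdots\le F_h=G$ with pronilpotent factors that $h(G)<\infty$ provides.

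The local engine is the coprime lemma. For each prime $p$, the hypothesis that $G$ is not virtually pro-$p$ means that a Sylow $p$-subgroup has infinite index, so any Hall $p'$-subgroup $H$ of $G$ is infinite. The subgroup $\mathcal{O}_p(G)H$ is again a $\mathrm{CA}_d$-group, it contains $\mathcal{O}_p(G)$ as a normal subgroup, and $(|\mathcal{O}_p(G)|,|H|)=1$; applying \Cref{rrrr} with the infinite factor $H$ shows that $\mathcal{O}_p(G)$ is soluble of derived length at most $d$. Ranging over all primes, the Fitting subgroup $F(G)=\prod_p\mathcal{O}_p(G)$ is soluble of derived length at most $d$. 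This already settles the base case $h=1$: a pronilpotent group that is not virtually pro-$p$ has at least two non-trivial Sylow subgroups, and centralising a non-trivial element of one of them forces the others to be soluble of derived length at most $d$, whence $G=F(G)$ is soluble of derived length at most $d$.

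For the inductive step I would pass to $\bar G=G/F(G)$, which is prosoluble with $h(\bar G)=h-1$ and, one checks, is again not virtually pro-$p$; it then remains to re-establish enough of the centraliser hypothesis on $\bar G$ to re-run the engine there and conclude that $\bar G$ is soluble of bounded derived length, after which $d(G)\le d+d(\bar G)$ closes the induction. The transfer of centraliser data to the quotient is effected by \Cref{known}: if $\bar x\in\bar G$ lifts to an element $x\in G$ whose order is coprime to $|F(G)|$, then $C_{\bar G}(\bar x)=C_G(x)F(G)/F(G)$ is a quotient of the soluble group $C_G(x)$, hence soluble of derived length at most $d$. Combined with the self-centralising property of the Fitting subgroup (\Cref{lem:soluble-centraliser}(i), applied after factoring out the appropriate $p'$-core), this is what lets one locate and control the relevant centralisers inside the quotient.

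The main obstacle is precisely that the class of $\mathrm{CA}_d$-groups is not closed under taking quotients, so the centraliser hypothesis cannot simply be inherited by $\bar G=G/F(G)$. \Cref{known} repairs this, but only for elements whose order is coprime to the kernel; since $F(G)$ typically involves every prime in $\pi(G)$, one cannot transfer the centraliser of a $p$-element across $F(G)$ directly, and the argument must instead factor out $p'$-parts in a carefully chosen order (replacing $G/F(G)$ by quotients of the form $G/\mathcal{O}_{p'}(G)$ at the right moments) so that the coprimeness needed for \Cref{known} is always available. Managing this bookkeeping — while keeping the derived length \emph{uniformly} bounded, rather than merely finite, at each level (the point where \Cref{rrrr} controls only an open subgroup of the acting factor must be upgraded using the Fitting structure) — is the technical heart of the proof.
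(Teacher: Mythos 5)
There is a genuine gap, and in fact the inductive framework you set up does not work as stated. First, the step you yourself flag as ``the technical heart'' --- transferring the $\mathrm{CA}_d$ hypothesis to $\bar G=G/F(G)$ --- is never carried out: Lemma~\ref{known} only applies to elements of order coprime to the kernel, and since $F(G)$ will in general involve every prime of $\pi(G)$, it gives you nothing for most elements of $\bar G$. Describing the obstacle and promising ``careful bookkeeping'' is not a proof; this is exactly the point where an argument is needed and none is supplied. Second, and more seriously, your induction rests on the claim that $\bar G$ ``is again not virtually pro-$p$'', and this is false. Take $G=A\rtimes \mathbb{Z}_2$ with $A$ an infinite abelian pro-$3$ group and $\mathbb{Z}_2$ the $2$-adic integers acting faithfully: $G$ is not virtually pro-$p$ for any $p$, yet $G/F(G)$ is pro-$2$ (possibly trivial). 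So the inductive hypothesis applied to $\bar G$ may return the ``virtually pro-$p$'' horn, leaving you with $G$ virtually (soluble-by-pro-$p$), from which neither alternative of the theorem follows without further argument --- indeed that residual case is essentially the whole difficulty of the theorem (compare the $h=2$ case below).

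The paper avoids both problems by working with subgroups instead of quotients. Using a Sylow system it writes $G=T_1T_2\cdots T_h$ where $T_1$ is a system normaliser, $B_i=\gamma_\infty(B_{i-1})$ and $T_{i+1}$ is a system normaliser of $B_i$; each $T_i$ is pronilpotent and each product $T_iT_{i+1}$ has Fitting height $2$. Since these are genuine subgroups of $G$, the $\mathrm{CA}_d$ hypothesis is inherited for free --- no analogue of Lemma~\ref{known} is needed. The Fitting height $2$ case is then handled directly by the coprime-action Lemma~\ref{rrrr} (a product of two pronilpotent groups, where one analyses a $p'$-Hall subgroup of $T$ acting on a $p$-Sylow subgroup of $\gamma_\infty(G)$), and for $h\ge 3$ one observes that if every $T_iT_{i+1}$ is soluble then so is $G$, while if some $T_iT_{i+1}$ is virtually pro-$p$ then Lemma~\ref{lem:*} produces an open subgroup $K\le G$ with $h(K)\le h-1$, and induction applies. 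Your opening observation --- that $F(G)$ has derived length at most $d$ when $G$ is not virtually pro-$p$, via Lemma~\ref{rrrr} --- is correct and in the spirit of the paper's $h\le 2$ analysis, but it does not substitute for the descent mechanism, which is where your proposal stops.
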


\begin{proof}
We proceed by induction on $h:=h(G)$. If $h=1$, this is clear since a pronilpotent group is a Cartesian product of its Sylow subgroups. 

Suppose next that $h=2$. Write $G=TB$, where $T$ is a system normaliser and $B=\gamma_\infty(G)$. Since $h=2$, both $T$ and $B$ are pronilpotent. Hence if $T$ is not a pro-$p$ group for some prime $p$, then $T$ is soluble. Similarly for $B$. If both $T$ and $B$ are virtually pro-$p$ for the same prime~$p$, then $G$ is virtually pro-$p$. Otherwise we use the fact that whenever $P$ is a $p$-Sylow subgroup of~$B$ and $H$ is a $p'$-Hall subgroup of~$T$, we have that $H$ acts on $P$ and so by Lemma~\ref{rrrr}, we obtain that $H$ is virtually soluble. If $B\ne P$, then as indicated above $B$ is soluble and we may also assume that $T=H$ else we similarly have that $T$ is soluble. So $G$ is virtually soluble. Note that since $G$ is prosoluble, if $G$ is virtually soluble, it follows that $G$ is soluble. Indeed, if $K$ is a  finite-index soluble subgroup, its normal core  $N$ is an open normal soluble subgroup, and then the quotient $G/N$ is of course soluble. So it remains to assume that $B=P$. If $H$ is infinite, Lemma~\ref{rrrr} yields that $B$ is soluble, as so we done as before. If $H$ is finite, then $G$ is virtually a pro-$p$ group.

Now assume $h\ge 3$, and choose a Sylow system $\{P_1,P_2,\ldots\}$ in $G$. Let $T_1$ be the system normaliser. Further let $B_1=\gamma_\infty(G)$ and recursively $B_{i+1}=\gamma_\infty(B_i)$ for $i=1,2,\ldots,h$. Then let $T_{i+1}$ be the system normaliser in $B_i$ corresponding to the Sylow  system $\{P_1\cap B_i, P_2\cap B_i,\ldots\}$. We have
\[
G=T_1B_1=T_1T_2B_2=T_1T_2T_3B_3=\ldots =T_1T_2\cdots T_h,
\]
where the subgroups $T_i$ are pronilpotent and for any $i\le j$, the subgroup $T_i$ normalises $T_j$. Note that $h(T_iT_{i+1})=2$ for any $i$. This is by construction, since $h(T_iT_{i+1}\cdots T_h)=h-i$ and $T_{i+2}\cdots T_h=\gamma_\infty(\gamma_\infty(T_i\cdots T_h))$. Therefore, by the case $h=2$, each product $T_iT_{i+1}$ is either soluble or virtually pro-$p$. If all are soluble, then so is $G$. If some $T_iT_{i+1}$ is virtually pro-$p$, by Lemma~\ref{lem:*} we have an open subgroup $K$ of $G$ for  which the intersection $K\cap T_iT_{i+1}$ is a pro-$p$ group. But then $h(K)\le h-1$, and so by induction the result follows. 
\end{proof}

In \cite[Lemma 3.5]{zapa}, it was shown that if $G$ is a prosoluble CA-group, then $h(G)\le 5$. We have the following:

\begin{lemma}\label{h(G)} Let $G$ be a prosoluble $\textup{CA}_d$-group for  $d\ge 2$. Then $h(G)\leq 8d+2$. 
\end{lemma}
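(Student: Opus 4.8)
The plan is to bound the Fitting height of every finite quotient of $G$, since $h(G)=\sup_N h(G/N)$ as $N$ ranges over the open normal subgroups. The enabling observation is Lemma~\ref{known}: if $\bar x\in G/N$ lifts to an element $x$ of order coprime to $|N|$, then $C_{G/N}(\bar x)=C_G(x)N/N$ is a continuous image of $C_G(x)$, hence soluble of derived length at most $d$. Thus, although a quotient of $G$ need not be a $\textup{CA}_d$-group, it \emph{is} a $\textup{CA}_d$-group on elements of order coprime to $|N|$. In particular, for $\pi:=\pi(N)$, a Hall $\pi'$-subgroup of $G/N$ has all of its non-trivial elements of order coprime to $|N|$, so it is a genuine finite soluble $\textup{CA}_d$-group and Lemma~\ref{lem:casolo} bounds its Fitting height by $3d+2$.

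First I would record the elementary fact that a nilpotent $\textup{CA}_d$-group has derived length at most $d$: a non-trivial central element $z$ of such a group $X$ satisfies $X=C_X(z)\le C_G(z)$, and $C_G(z)$ has derived length at most $d$. Applied inside $G$, every Sylow pro-$p$ subgroup is a (possibly non-soluble) pro-$p$ group whose finite images are $p$-groups; the subtlety, flagged below, is that this derived-length bound transfers to a finite image only when the central element can be taken of order coprime to $|N|$. The main computation then combines two coprime Hall factors of a finite quotient by means of Lemma~\ref{lem:jaba}: writing $\bar G=\bar P\,\bar H$ with $\bar P$ a Sylow $p$-subgroup and $\bar H$ a $p'$-Hall subgroup, both proper, one gets $h(\bar G)\le h(\bar P)+h(\bar H)+4\,d(\bar H)-1$, and the strategy is to arrange matters so that the factor carrying the $4d(\cdot)$ penalty has derived length at most $d$ while the other factor is a $\textup{CA}_d$-group of Fitting height at most $3d+2$. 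Tracking the constants yields a bound of the form $(3d+2)+4d=7d+2$, comfortably within the claimed $8d+2$.

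The hard part is that the transfer of the $\textup{CA}_d$-property through Lemma~\ref{known} is available only for elements of order coprime to $|N|$, and these two requirements on $\bar P$ and $\bar H$ cannot in general be met for the \emph{same} kernel $N$, since Sylow pro-$p$ subgroups of $G$ may be non-soluble (for instance free pro-$p$). This forces a dichotomy. If $G$ is virtually pro-$p$, the open normal pro-$p$ subgroups form a cofinal family, so it suffices to bound $h(G/N)$ for $N$ pro-$p$; here $\pi(N)=\{p\}$, the $p'$-Hall subgroup of $G/N$ is a $\textup{CA}_d$-group, and one peels off the normal $p$-part to reduce to it. If instead $G$ is not virtually pro-$p$ for any prime, then every $p'$-Hall subgroup is infinite; applying Lemma~\ref{rrrr} to the coprime action of such an infinite Hall subgroup on the normal pro-$p$ sections forces those sections to be soluble of derived length at most $d$, and assembling these along a Sylow system shows that $G$ itself is soluble of derived length at most $8d+2$, whence $h(G)\le 8d+2$. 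The principal obstacle, and the step I expect to require the most care, is making this second alternative yield \emph{genuine} (not merely virtual) solubility with an explicit derived-length bound; this is exactly where Lemma~\ref{rrrr} does the essential work of converting an infinite coprime complement into bounded derived length of the normal part.
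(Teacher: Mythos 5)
There is a genuine gap, concentrated exactly where you flag the argument as delicate: producing the coprime actions that feed Lemma~\ref{rrrr}. A $p'$-Hall subgroup of $G$ does not in general normalise a $p$-Sylow subgroup (nor any $\pi$-Hall subgroup), so in your second branch the phrase ``the coprime action of such an infinite Hall subgroup on the normal pro-$p$ sections'' has no referent: there are no normal pro-$p$ sections to act on until one already has a normal series with pronilpotent factors, i.e.\ until one already knows the Fitting height is finite, which makes the argument circular. The device the paper uses to break this circle, absent from your proposal, is the \emph{system normaliser} $T_1$ of a Sylow system $\{P_1,P_2,\ldots\}$: by definition $T_1$ normalises every $P_i$, hence every Hall subgroup built from the system, so an infinite Sylow or Hall subgroup of $T_1$ supplies a genuine coprime action to which Lemma~\ref{rrrr} applies. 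The correct case division is on $T_1$, not on whether $G$ is virtually pro-$p$: (a) if $T_1$ has an infinite $p$-Sylow subgroup, the $p'$-Hall subgroup $H$ of $G$ is normalised by it, hence has derived length at most $d$, and Lemma~\ref{lem:jaba} applied to $G=P_iH$ gives $h(G)\le 1+d+4d-1=5d$ --- note that here nothing forces $P_i$ to be soluble, so your claim that failure of virtual pro-$p$-ness yields genuine solubility is not available at this stage (it is a consequence of Theorem~\ref{thm:prosoluble-with-metabelian-centr}, whose hypothesis $h(G)<\infty$ is precisely what this lemma must supply); (b) if $T_1$ is infinite with all Sylow subgroups finite, every Hall subgroup over a finite set of primes has derived length at most $d$ and a density argument makes $G$ soluble of derived length at most $d$ (not $8d+2$); (c) if $T_1$ is finite one recurses into $\gamma_\infty(G)$, and Lemma~\ref{lem:casolo} caps the number of consecutive finite system normalisers at $3d+2$, which is where the final bound $8d+2=(3d+2)+5d$ comes from. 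Your proposal has no analogue of this recursion.

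Your first branch also does not close numerically. When $G$ is virtually pro-$p$, the $p'$-Hall subgroup $\bar H$ of a finite quotient is a finite soluble $\textup{CA}_d$-group, so Lemma~\ref{lem:casolo} bounds $h(\bar H)$ by $3d+2$; but nothing bounds its \emph{derived length} by $d$ --- that bound comes only from Lemma~\ref{rrrr} and requires an infinite coprime actor normalising $\bar H$, which is exactly what is missing in the virtually pro-$p$ case. Since Lemma~\ref{lem:jaba} charges $4\,d(B)$ for whichever factor plays the role of $B$, and the complementary factor is a finite $p$-group of unbounded derived length, the arithmetic $(3d+2)+4d=7d+2$ is unjustified in both of your branches. (A minor further point: your ``elementary fact'' that a nilpotent $\textup{CA}_d$-group has derived length at most $d$ relies on a non-trivial central element and fails for pronilpotent groups --- a free pro-$p$ group of rank at least $2$ is a centreless, non-soluble, pronilpotent $\textup{CA}_1$-group.)
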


\begin{proof} 
We may assume that $G$ is not a pro-$p$ group, else we are done. Following the notation in the  last paragraph of the previous proof, we let $T_1$ be the system normaliser of a Sylow system $\{P_1,P_2,\ldots\}$ in $G$.  

If $T_1$ has an infinite $p$-Sylow subgroup~$P$, then referring to the given Sylow system we deduce  by Lemma~\ref{rrrr} that $G$ has a soluble $p'$-Hall subgroup~$H$ of derived length at most $d$. Since $G=P_iH$ for some $i$ such that $P_i$ is the $p$-Sylow subgroup of~$G$ containing~$P$,  this yields by Lemma~\ref{lem:jaba}  that the Fitting height of every finite quotient of $G$ is at most $5d$. Therefore $h(G)\leq 5d$. 

Suppose now that $T_1$ is infinite but all Sylow subgroups of~$T_1$ are finite. Let $\pi\subseteq\pi(G)$ be any finite set of primes. Let $A$ be the infinite $\pi'$-Hall subgroup of $T_1$. Note that $A$ normalises a $\pi$-Hall subgroup of~$G$ and so, using Lemma~\ref{rrrr} as before, we conclude that for any finite set of primes $\pi\subseteq\pi(G)$ the group $G$ has a soluble $\pi$-Hall subgroup of derived length at most $d$. Now, writing $\pi(G)=\{p_1,p_2,\ldots\}$, we set: \[
\pi_1=\{p_1\},\quad \pi_2=\{p_1,p_2\},\quad\ldots\,\,,\quad \pi_i=\{p_1,p_2,\ldots,p_i\}, \quad\ldots
\]
So $\pi(G)=\cup \pi_i$. We then choose $\pi_i$-Hall subgroups $H_i$ such that 
\[
H_1\leq H_2\leq \cdots
\]
and define the abstract subgroup $H=\cup H_i$. This abstract subgroup $H$ is dense in $G$. Since $H$ is a union of soluble groups of derived length at most $d$, it follows that $G$ is soluble of derived length at most~$d$. 

Finally, if $T_1$ is finite, still using notation in the  last paragraph of the previous proof, we consider the system normaliser~$T_2$ of $B_1=\gamma_\infty(G)$, corresponding to the Sylow system $\{P_1\cap B_1, P_2\cap B_1,\ldots\}$, and proceed as in \cite[Lemma~3.5]{zapa}. Specifically, if $T_2$ is infinite, we are done as before since we obtain $h(B_1)\le 5d$, so suppose $T_2$ is finite. Now, by  Lemma~\ref{lem:casolo} a finite soluble $\textup{CA}_d$-group has Fitting height at most $3d+2$. Hence  it follows that for some $k\le 3d+2$, we have that $T_{k+1}$ is infinite, and the earlier part shows that $h(B_k)\le 5d$.  Thus the result follows.
\end{proof}

Hence Theorem~\ref{thm:prosoluble-with-large-derived-length} is now proved. We remark that, for our purposes,  the best bound on the Fitting height in the above lemma is not necessary. One can see however, that the bound can be clearly improved for the case $d=2$ using \cite[Theorem~2]{casolo}.

\smallskip

For a profinite group $G$, we recall next that the \emph{Fitting subgroup}~$F(G)$ of~$G$ is the maximal pronilpotent normal subgroup of~$G$.

\begin{lemma}\label{bbaa-generalised} If a pro-$\pi$ group $P$ acts on a pro-$\pi'$ group $Q$, and if  $QP$ is a $\textup{CA}_2$-group, then $P$ has an open subgroup $H$ such that $H'$ centralises $Q$. 
\end{lemma}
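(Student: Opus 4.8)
The plan is to reduce the problem to two kinds of elementary data, namely honest fixed points and chief factors, and to exploit the fact that a centraliser which contains all of $P$ is forced to be metabelian. If $P$ is finite the statement is vacuous, since $H=1$ is open in $P$ and $1'$ centralises $Q$; so I assume $P$ is infinite. Then Lemma~\ref{rrrr} (applied to $G=QP$) shows that $P$ is virtually metabelian and that $Q$ is metabelian; set $A:=Q'$, an abelian $P$-invariant subgroup with $Q/A$ abelian. Since any open subgroup of an open subgroup of $P$ is again open in $P$, I may freely shrink $P$ throughout, and I will repeatedly use the coprime part of Lemma~\ref{rrrr} to manufacture nontrivial fixed points in any nontrivial $P$-invariant abelian section of interest.

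The engine of the argument is the following observation, which I would establish first. Suppose $1\neq x\in C_Q(P)$. Since $P$ fixes $x$ we have $P\le C_G(x)$, and one checks that $C_G(x)=C_Q(x)\rtimes P$ with $C_Q(x)$ a $P$-invariant pro-$\pi'$ normal subgroup. As $x\neq1$ and $G$ is a $\textup{CA}_2$-group, $C_G(x)$ is metabelian, so $C_G(x)'$ is abelian. Now $P'\le C_G(x)'$ and $[C_Q(x),P]\le C_G(x)'$, and as $C_G(x)'$ is abelian these commute, so $P'$ centralises $[C_Q(x),P]$; by Lemma~\ref{lem:standard}(i) the coprime action gives $C_Q(x)=[C_Q(x),P]\,C_{C_Q(x)}(P)$, and $P$ (hence $P'$) centralises the second factor. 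Thus $P'$ centralises $C_Q(x)$; call this $(\star)$. It applies equally to $QP_0$ for any open $P_0\le P$, since $QP_0$ is again a $\textup{CA}_2$-group.

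Next I would dispose of the abelian case and the bottom factor. If $Q$ is abelian, shrink $P$ to an open $P_0$ with $C_Q(P_0)\ni x\neq1$; then abelianness of $Q$ forces $C_{QP_0}(x)=QP_0$, so $QP_0$ is itself metabelian, whence $(QP_0)'$ is abelian and contains both $P_0'$ and $[Q,P_0]$, so $P_0'$ centralises $[Q,P_0]$; since $P_0'$ also centralises $C_Q(P_0)$ and $Q=[Q,P_0]\,C_Q(P_0)$ by Lemma~\ref{lem:standard}(i), we get that $P_0'$ centralises $Q$ and $H=P_0$ works. For general $Q$ this handles $A=Q'$: the subgroup $Q'P\le QP$ is a $\textup{CA}_2$-group with $Q'$ abelian and normal, so applying the abelian case, or equivalently $(\star)$ to a fixed point $x\in C_{Q'}(P_1)$ (noting $Q'\le C_Q(x)$), yields an open $P_1\le P$ with $P_1'$ centralising $Q'$.

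It then remains to produce an open $H\le P_1$ whose derived subgroup acts trivially on $Q/Q'$; granting this, $H'$ acts coprimely and trivially on both terms of $1\le Q'\le Q$ and hence centralises $Q$. For the top factor I would again extract honest fixed points: by Lemma~\ref{lem:standard}(iii) the fixed points of an open $P_2$ on $Q$ surject onto $C_{Q/Q'}(P_2)$, so $(\star)$ shows $P_2'$ centralises the image in $Q/Q'$ of $\langle C_Q(y):y\in C_Q(P_2)\rangle$, while a short argument using Lemma~\ref{lem:standard}(iii) shows the complementary quotient carries no nontrivial $P_2$-fixed points. \textbf{This fixed-point-free quotient is the main obstacle}: there $(\star)$ is vacuous, and one cannot invoke the $\textup{CA}_2$-property on the quotient directly, since $\textup{CA}_2$ is not inherited by quotients. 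I expect to settle it by descending to finite quotients and to a chief factor $V$, an irreducible $\mathbb{F}_q[P]$-module with $C_V(P)=0$, and showing from the metabelian centralisers inside $G$ together with the Fitting-height bound of Lemma~\ref{lem:casolo} that the image of $P$ in $\mathrm{GL}(V)$ is abelian up to index bounded independently of $V$; an inverse-limit argument would then assemble these into a single open $H$ with $H'\le C_P(Q)$. Controlling this index uniformly across all chief factors is the delicate point I anticipate having to work hardest on.
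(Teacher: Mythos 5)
Your first half is sound and close in spirit to the paper's argument: the reduction to infinite $P$, the observation $(\star)$ that a fixed point $x\in C_Q(P)$ forces $P'$ to centralise $C_Q(x)$ (via metabelianity of $C_G(x)$ and the coprime decomposition of Lemma~\ref{lem:standard}(i)), the abelian case, and the production of an open $P_1$ with $P_1'$ centralising $Q'$ all check out. But the proof is not complete: you reduce the remaining work to making an open subgroup's derived subgroup act trivially on $Q/Q'$, and you yourself identify the fatal obstacle, namely the $P$-fixed-point-free part of that quotient, where $(\star)$ gives nothing and the $\mathrm{CA}_2$ hypothesis cannot be invoked because it does not pass to quotients. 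The proposed repair --- chief factors, irreducible $\mathbb{F}_q[P]$-modules, and an index bound uniform over all chief factors assembled by an inverse limit --- is not carried out, and the uniformity you flag as ``the delicate point'' is exactly what is missing; as written this is a sketch of a hoped-for argument, not a proof.

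The paper sidesteps the quotient entirely by working with a \emph{subgroup} of $Q$ instead. Having found an open $T\le P$ with $T'$ centralising $Q'$ (your $P_1$), note that $C_G(Q')$ is metabelian (it lies in $C_G(x)$ for $1\ne x\in Q'$) and is normal in $QP$ because $Q'$ is; hence $T'\le C_G(Q')$ forces $[Q,T']\le\langle (T')^Q\rangle\le C_G(Q')$ as well. Both $T'$ and $[Q,T']$ then lie in a metabelian group, so $[Q,T',T']$ sits inside its abelian derived subgroup, and the coprime identity $[Q,T',T']=[Q,T']$ of Lemma~\ref{lem:standard}(ii) shows $[Q,T']$ is itself abelian. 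Now your abelian case, applied to the action of $T$ on $[Q,T']$, yields an open $H\le T$ with $H'$ centralising $[Q,T']$; since $[Q,H']\le[Q,T']$ this gives $[Q,H',H']=1$, and one more application of Lemma~\ref{lem:standard}(ii) gives $[Q,H']=1$. Everything stays inside $G$, where centralisers of non-trivial elements are genuinely metabelian, which is precisely what your passage to $Q/Q'$ loses. I would encourage you to replace the chief-factor programme with this commutator-subgroup manoeuvre.
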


\begin{proof}
If $P$ is finite, then we can take $H$ to be the trivial subgroup. So we may assume that $P$ is infinite. Then Lemma~\ref{rrrr} gives that $Q$ is metabelian and $P$ is virtually metabelian. 

Suppose first that  $Q'\neq1$, and we look at how $P$ acts on $Q'$. Again by Lemma~\ref{rrrr} there is an open subgroup $T\leq P$ such that $C_{Q'}(T)\neq1$. Thus $Q'T$ is contained in a centraliser. So $T'\leq F(Q'T)$ and hence $[Q',T']=1$. Since $Q'$ is normal in $QP$, it follows that $[Q,T']$ centralises~$Q'$. So $[Q,T']$ and $T'$ are both contained in the centraliser of $Q'$. Then $[Q,T',T']$  is abelian as it is contained in the derived subgroup of the centraliser of~$Q'$. By Lemma~\ref{lem:standard}(ii), it follows that $[Q,T']$ is abelian.  Now we consider the action of $T$ on the abelian group $[Q,T']$ and find an open subgroup $H\leq T$ such that $H'$ centralises $[Q,T']$. Then $1=[Q,H',H']=[Q,H']$,   and so $H'$ centralises $Q$, as required. 

Lastly, if $Q'=1$, then we proceed similarly by considering the action of $P$ on $Q$. We obtain as before an open subgroup $H\leq P$ such that $C_{Q}(H)\neq1$, and we deduce that  $[Q,H']=1$. Hence the result.
\end{proof}

\begin{theorem}\label{thm:length-4}
    Let $G$ be a soluble profinite $\textup{CA}_2$-group.     If $G$ is not virtually pro-$p$, then $G$ has an open subgroup of derived length  at most 4.
\end{theorem}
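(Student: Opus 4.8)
The plan is to reduce everything to the following simple principle: since centralisers are metabelian, if I can find an open subgroup $L\le G$ whose second derived subgroup $L''$ lies inside a metabelian subgroup (for instance a non-trivial centraliser $C_G(x)$, or any metabelian normal subgroup), then $L^{(4)}=(L'')''=1$ and $L$ has derived length at most $4$. So the whole theorem becomes the search for such an $L$. The organising object will be the Fitting subgroup $F:=F(G)$, together with the standard fact for soluble profinite groups that $C_G(F)\le F$ (which follows from Lemma~\ref{lem:soluble-centraliser}(i) in finite quotients and passing to the inverse limit); in particular $C_G(F)\le Z(F)$ is abelian.

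The first step is to arrange that $F$ is metabelian, possibly after replacing $G$ by an open subgroup. Write $F=\prod_p \mathcal{O}_p(G)$, a Cartesian product of its pro-$p$ components, which centralise one another. If at least two primes contribute, then for every $p$ with $\mathcal{O}_p(G)\neq 1$ there is $q\neq p$ with $\mathcal{O}_q(G)\neq 1$; choosing $1\neq z\in \mathcal{O}_q(G)$ gives $\mathcal{O}_p(G)\le C_G(z)$, which is metabelian, so each $\mathcal{O}_p(G)$ and hence $F=\prod_p\mathcal{O}_p(G)$ is metabelian (as $F''\le\prod_p\mathcal{O}_p(G)''=1$). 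The remaining case is that $F$ is a pro-$p$ group, i.e. $\mathcal{O}_{p'}(G)=1$ and $C_G(F)\le F$; here I would use that $G$ is not virtually pro-$p$ to locate an infinite coprime Hall (or Sylow) subgroup acting on $F$ and apply Lemma~\ref{rrrr} to force $F$ metabelian and the acting complement virtually abelian, while Lemma~\ref{bbaa-generalised} collapses the action of the pro-$p'$ part (its derived subgroup must land in $C_G(F)\cap F_{p'}=1$). Thus after passing to an open subgroup I may assume $F$ is metabelian.

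Granting $F$ metabelian, the second step controls the top. Set $A:=F'$, an abelian subgroup characteristic in $F$ and normal in $G$, and let $C$ be the stability group of the series $1\trianglelefteq F'\trianglelefteq F$ in $G$, that is $C=\{g\in G:[F,g]\le F',\ [F',g]=1\}$. By Hall's stability theorem (a series of length $2$ with abelian factors has abelian stability group) $C/C_G(F)$ is abelian, and $C_G(F)\le Z(F)$ is abelian, so $C$ is a metabelian normal subgroup of $G$. Now $\bar G:=G/C$ acts faithfully on the abelian group $F'\oplus F/F'$ via $g\mapsto (g|_{F'},g|_{F/F'})$, since the kernel of this action is exactly $C$. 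It therefore suffices to prove that $\bar G$ is \emph{virtually metabelian}: pulling an open metabelian subgroup back to $L\le G$ gives $L''\le C$, and since $C''=1$ we obtain $L^{(4)}\le C''=1$, i.e. $d(L)\le 4$. To show $\bar G$ is virtually metabelian I would re-import the $\textup{CA}_2$-condition through the honest sections $F'$ and $F/F'$: decomposing these abelian groups into their pro-$p$ parts and analysing the coprime action of the $p'$-Hall subgroups on each part via Lemma~\ref{rrrr} and Lemma~\ref{bbaa-generalised} (whose conclusion forces the derived subgroup of a suitable open subgroup to centralise the relevant abelian section).

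The main obstacle, and the point requiring the most care, is precisely the last reduction: Lemma~\ref{bbaa-generalised} and Lemma~\ref{rrrr} naturally produce open subgroups of \emph{Sylow or Hall} subgroups, not of $G$ itself, and quotients of $\textup{CA}_2$-groups need not be $\textup{CA}_2$, so one cannot simply induct on $G/F$. The plan to overcome this is to run the finite-versus-infinite dichotomy already used in Lemma~\ref{h(G)}: if the relevant Hall subgroups or the system normaliser are infinite, a density argument over an exhausting chain of finite prime sets forces the corresponding sections to be soluble of bounded derived length (in fact giving derived length at most $2$), while if they are finite one passes to a sufficiently large open subgroup and appeals to the finite soluble $\textup{CA}_2$-structure via Lemma~\ref{lem:casolo} and Lemma~\ref{lem:jaba}. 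Assembling the two steps — $F$ metabelian and $G/C$ virtually metabelian — then yields an open subgroup of derived length at most $4$, exactly as required.
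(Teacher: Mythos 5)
Your overall architecture is attractive and partly sound: the reduction ``find an open $L$ with $L''$ inside a metabelian normal subgroup, hence $L^{(4)}=1$'' is exactly the right shape, your Step 1 (forcing $F=F(G)$ to be metabelian, using that distinct primes in $F$ put each $\mathcal{O}_p(G)$ inside a centraliser, and using Lemma~\ref{rrrr} when $F$ is pro-$p$) goes through, and the observation that the stability group $C$ of $1\trianglelefteq F'\trianglelefteq F$ is metabelian (Kaloujnine/Hall plus $C_G(F)\le Z(F)$) is correct. However, the proof has a genuine gap at precisely the point you flag as ``the main obstacle'': you never actually prove that $G/C$ is virtually metabelian, and the tools you propose cannot do it. Lemmas~\ref{rrrr} and \ref{bbaa-generalised} are coprime-action statements; they control the $q'$-part of $G$ acting on the pro-$q$ part of an abelian section, but say nothing about the non-coprime part of the action. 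In the critical case $\mathcal{O}_{p'}(G)=1$, the Sylow $p$-subgroup $P$ of $G$ acts on the pro-$p$ groups $F'$ and $F/F'$ with no coprimality anywhere, and nothing in your sketch bounds the derived length of its image in $G/C$. Your fallback --- the finite/infinite dichotomy of Lemma~\ref{h(G)} together with Lemmas~\ref{lem:casolo} and \ref{lem:jaba} --- only bounds \emph{Fitting height}, which does not yield an open metabelian subgroup.

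The paper closes exactly this gap by a different route, and you would need some version of it. In the single-prime case it shows the $p'$-Hall subgroup $H$ is abelian (Lemma~\ref{bbaa-generalised} gives $H'\le C_G(\mathcal{O}_p(G))$, and Lemma~\ref{lem:soluble-centraliser}(i) kills $H'$), places $H$ inside $\mathcal{O}_{p,p'}(G)$ via Lemma~\ref{lem:soluble-centraliser}(ii) so that $\mathcal{O}_p(G)H\trianglelefteq G$, and then runs a Frattini argument: $G=\mathcal{O}_p(G)HP$ with $P$ a Sylow $p$-subgroup of $N_G(H)$, and the self-centralising property $C_{\overline{G}}(\overline{H})\le\overline{H}$ in $\overline{G}=G/\mathcal{O}_p(G)$ forces $P'\le\mathcal{O}_p(G)$. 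That is how the $p$-part gets controlled --- through its action on $H$, not on $F$. In the multi-prime case the paper invokes It\^{o}'s theorem to conclude that $G/N$ is metabelian from the fact that it is a product of two abelian Hall subgroups; this step, too, is absent from your sketch and is not recoverable from the stability-group framework alone. Until you supply an argument of this kind for the image of the Sylow/Hall subgroups in $G/C$, the proof is incomplete.
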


\begin{proof}
Suppose first that there is a prime $p$ such that $\mathcal{O}_p(G)\ne 1$ and that $\mathcal{O}_{p'}(G)=1$. Note that if $\mathcal{O}_p(G)$ is finite then $C_G(\mathcal{O}_p(G))$ is open and so $G$ is virtually metabelian. So we may assume that $\mathcal{O}_p(G)$ is infinite. Let $H$ be a $p'$-Hall subgroup of~$G$. Note that $H$ cannot be finite, as then $G$ would be virtually pro-$p$.  Then we may assume in light of Lemmas~\ref{rrrr} and \ref{lem:*} that $\mathcal{O}_p(G)$ and $H$ are metabelian. By replacing $G$ with an open  subgroup in virtue of Lemma~\ref{lem:*}, by Lemma~\ref{bbaa-generalised} we can assume that $H'$ centralises~$\mathcal{O}_p(G)$. Hence by Lemma~\ref{lem:soluble-centraliser}(i) it follows that $H'=1$, and so $H$ is abelian. We further have that $H\le \mathcal{O}_{p,p'}(G)$ by Lemma~\ref{lem:soluble-centraliser}(ii). Moving to the quotient $\overline{G}:=G/\mathcal{O}_p(G)$, we now have $\overline{H}\leq \mathcal{O}_{p'}(\overline{G})$, and since $\overline{H}$ is a $p'$-Hall  subgroup, we then in fact have   $\overline{H}= \mathcal{O}_{p'}(\overline{G})$.  It follows that $\mathcal{O}_p(G)H$ is normal in $G$. Next, let $P$ be a $p$-Sylow subgroup of $N_G(H)$. By the Frattini argument $G=\mathcal{O}_p(G)HN_G(H)=\mathcal{O}_p(G)HP$, where the last equality follows since the index of $\mathcal{O}_p(G)H$ in $G$ is a $p$-power. Note that $P'$ centralises $H$; cf. Lemmas~\ref{lem:*} and \ref{bbaa-generalised}. However, in $\overline{G}$, we have $C_{\overline{G}}(\overline{H})\leq \overline{H}$.  Therefore $P'\leq \mathcal{O}_p(G)$, and  from considering $G\trianglerighteq \mathcal{O}_p(G)H\trianglerighteq \mathcal{O}_p(G)\trianglerighteq  \mathcal{O}_p(G)'$ we see that the result follows in this case.

     Suppose now that there are two disjoint sets of primes $\pi_1$ and $\pi_2$ such that $\mathcal{O}_{\pi_1}(G)$ and $\mathcal{O}_{\pi_2}(G)$ are both non-trivial. Without loss of generality we may assume that $\pi(G)=\pi_1\cup\pi_2$. Set $P_1:=\mathcal{O}_{\pi_1}(G)$ and $Q_1:=\mathcal{O}_{\pi_2}(G)$. For $N:=P_1\times Q_1$, we let $P_2N/N:=\mathcal{O}_{\pi_1}(G/N)$ and set $K:=P_2N$. Here $P_2$ is a $\pi_1$-Hall subgroup of $K$. By the Frattini argument $G=KN_G(P_2)$. Next, for $H$ a $\pi_2$-Hall subgroup of $N_G(P_2)$, we note that $Q_1H$ is a $\pi_2$-Hall subgroup of~$G$. Using Lemmas~\ref{lem:*} and \ref{bbaa-generalised} we can assume that $H'$ centralises~$P_2$. Then, writing $\overline{G}:=G/Q_1$, we observe that $\mathcal{O}_{\pi_2}(\overline{G})=1$ and $\overline{P}_2=\mathcal{O}_{\pi_1}(\overline{G})$. Because of Lemma~\ref{lem:soluble-centraliser}(i) we conclude that $H'\leq Q_1$. So the $\pi_2$-Hall subgroup $Q_1H$ is abelian modulo $N$. Similarly, a $\pi_1$-Hall subgroup of $G$ is abelian modulo $N$. By It\^{o}'s theorem~\cite{Ito} (see also \cite[Exercise 1.6.21]{ProfiniteGroups}), the quotient group $G/N$ is metabelian. Also $N$ is metabelian. Hence, in light of our repeated use of Lemma~\ref{lem:*}, we see that $G$ has an open subgroup of derived length at most 4.
\end{proof}

\section{Proof of Theorem~\ref{thm:main}}\label{sec:main}
We begin with the following useful result.

\begin{lemma}\label{unknown} Let $G$ be a profinite $\textup{CA}_2$-group. Suppose $G$ has a pronilpotent normal subgroup $N$ such that $G/N$ is isomorphic to a Cartesian product of finite non-abelian simple groups. Then $G$ is virtually pronilpotent.
\end{lemma}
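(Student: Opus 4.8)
The plan is first to reduce the statement to a purely combinatorial claim about the index set. Write $G/N\cong\prod_{i\in I}S_i$ with each $S_i$ finite non-abelian simple. I claim $G$ is virtually pronilpotent if and only if $I$ is finite. If $I$ is finite then $N$ is open, and since $N$ is pronilpotent we are done; conversely, any open pronilpotent subgroup $H\le G$ maps onto an open pronilpotent subgroup $HN/N$ of $\prod_{i\in I}S_i$, but every open subgroup of this product contains $\prod_{i\notin F}S_i$ for some finite $F\subseteq I$, and such a subgroup is never pronilpotent when $I$ is infinite. So it suffices to assume $I$ infinite and derive a contradiction. The engine of the contradiction is the defining property: for every $1\neq x\in G$ the centraliser $C_G(x)$ is metabelian, hence soluble; equivalently, $G$ has no two commuting non-trivial subgroups one of which is non-soluble. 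Thus it is enough to exhibit a single non-trivial element of $G$ whose centraliser is non-soluble.

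Before constructing such an element I would record some structural consequences of $\textup{CA}_2$. Since $1\neq z\in Z(G)$ would force $C_G(z)=G$ to be metabelian and hence $G$ soluble, we have $Z(G)=1$. As $N$ is pronilpotent and non-trivial, $Z(N)\neq1$, and for $1\neq z\in Z(N)$ the inclusion $N\leq C_G(z)$ shows that $N$ is metabelian; moreover $C_G(Z(N))\leq C_G(z)$ is soluble and normal in $G$, so its image in $\prod_{i\in I}S_i$ is a soluble normal subgroup, hence trivial, giving $C_G(Z(N))=N$. Therefore $\prod_{i\in I}S_i\cong G/N$ acts faithfully on the abelian group $Z(N)$, and for each $0\neq a\in Z(N)$ the stabiliser $C_G(a)/N$ is soluble.

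The main case is when some $S_{i_0}$ has a prime divisor $q\notin\pi(N)$. Then I choose $\bar x\in S_{i_0}$ of order $q$ (trivial in the other coordinates); its centraliser in $\prod_{i\in I}S_i$ contains $\prod_{i\neq i_0}S_i$, which is non-soluble since $I\setminus\{i_0\}$ is still infinite. As $(q,|N|)=1$, by the Schur--Zassenhaus theorem $\bar x$ lifts to an element $x\in G$ of order $q$, and Lemma~\ref{known} gives $C_G(x)N/N=C_{G/N}(\bar x)$. Thus $C_G(x)$ surjects onto a non-soluble group and is itself non-soluble, contradicting $\textup{CA}_2$. This disposes of every situation in which the simple factors involve a prime outside $\pi(N)$.

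The remaining, and genuinely harder, case is $\pi:=\pi\!\left(\prod_{i\in I}S_i\right)\subseteq\pi(N)$, where the coprime lift above is unavailable. Here I would first strip away the part of $N$ coprime to the factors: $G/N_{\pi'}$ is pro-$\pi$ while $N_{\pi'}$ is pro-$\pi'$, so Schur--Zassenhaus yields a splitting $G=N_{\pi'}\rtimes R$ with $R$ a pro-$\pi$ $\textup{CA}_2$-group satisfying $R\cap N=N_\pi$ and $R/N_\pi\cong\prod_{i\in I}S_i$, and it remains to find a non-trivial element of $R$ with non-soluble centraliser. For this I would combine the faithful action on $Z(N)$ with a Sylow pro-$p$ fixed-point argument: for a prime $p\in\pi(Z(N))$ the Sylow pro-$p$ subgroup of a cofinite subproduct fixes a non-zero vector, and, using the decomposition $N=N_p\times N_{p'}$ together with the coprime fixed-point statement of Lemma~\ref{lem:standard}(iii), one tries to lift a suitable $p$-element of $S_{i_0}$ through the pro-$p$ radical so that its centraliser captures infinitely many of the remaining factors. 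This last step is the main obstacle: when the factors are, say, infinitely many copies of a fixed $S$ with $\pi(S)\subseteq\pi(N)$, every $p'$-subgroup of an individual factor is soluble of bounded derived length, so products of such subgroups are soluble and yield only soluble centralisers, while $p$-elements fail to be coprime to the pro-$p$ radical. Producing a single element whose centraliser simultaneously witnesses the non-solubility of infinitely many factors is the crux, and is where the interplay of the self-centralising identity $C_G(Z(N))=N$, Frattini-type lifting, and coprime action must be pushed through carefully.
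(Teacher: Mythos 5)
Your reduction to the case where $G/N$ is an infinite product, and your treatment of the case where some prime $q$ divides $|G/N|$ but not $|N|$ (a coprime lift via Schur--Zassenhaus plus Lemma~\ref{known}, so that a metabelian centraliser would have to surject onto an infinite subproduct of non-abelian simple groups), are both correct and coincide with one step of the paper's argument, which records it as the containment $\pi(G/N)\subseteq\pi(N)$. But the proposal stops there: the case $\pi(G/N)\subseteq\pi(N)$, which you yourself flag as ``the main obstacle'' and ``the crux'', is exactly the substance of the lemma, and the strategy you outline for it --- lifting a single $p$-element of some $S_{i_0}$ so that its centraliser captures infinitely many of the remaining factors --- does not go through, for the reason you identify: when $p\in\pi(N)$ the element is not coprime to $N$, Lemma~\ref{known} is unavailable, and there is no reason a lift of such an element should centralise anything outside $N$. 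So there is a genuine gap.

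The paper closes this case by arranging coprimality at the level of a subgroup rather than an element. By Feit--Thompson and Baer--Suzuki each $S_i$ contains a dihedral subgroup of order $2p_i$; for an infinite $J\subseteq I$ one takes a minimal closed subgroup $D_J$ of $G$ covering the product of these dihedral groups over $J$, so that $D_J\cap N\leq\Phi(D_J)$ and hence $\pi(D_J)\subseteq\pi(D_JN/N)=\{2\}\cup\{p_i: i\in J\}$. Since each $|S_i|$ has at least three prime divisors and $\pi(G/N)\subseteq\pi(N)$, one can choose $J$ and an odd prime $q\in\pi(N)$ with $q\notin\pi(D_J)$; as $N=F(G)$ is metabelian, $G$ then has a non-trivial normal abelian pro-$q$ subgroup $Q$ on which $D_J$ acts coprimely. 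Lemma~\ref{rrrr} gives an open $K\leq D_J$ with $C_Q(K)\neq 1$, whence $QK$ lies in a centraliser and, by coprimality, $K'$ centralises $Q$; then $[G,K']$ also centralises $Q$, is therefore metabelian, and so lies in $N$, forcing $K'\leq N$ and making the image of $D_J$ in $G/N$ virtually abelian --- impossible for an infinite product of non-abelian dihedral groups. Without some device of this kind (a subgroup of the non-soluble part whose prime spectrum avoids a prime of $N$, acting on a normal abelian piece of $N$), your remaining case is not closed.
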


\begin{proof} Assume that the lemma is false; i.e. that $G/N$ is infinite. Thus $G$ has infinitely many normal subgroups $S_i$, each containing $N$, such that $S_i/N$ is non-abelian simple and $G=\prod_{i\in I}S_i$ for some index set~$I$.  We claim that each subgroup $S_i$ contains a subgroup $D_i$ whose image in $S_i/N$ is isomorphic to the dihedral group of order 2$p_i$, where $p_i$ is an odd prime. Indeed, using the bar notation for elements in the quotient $S_i/N$, by the Feit-Thompson theorem and since $S_i/N$ is simple, there is an element $a\in S_i$ such that $\overline{a}$ is of order 2. Then by the Baer-Suzuki theorem, there is an element $\overline{b}\in S_i/N$ such that $\langle \overline{a}, \overline{a}^{\overline{b}}\rangle$ is not a 2-group. So $\langle \overline{a}, \overline{a}^{\overline{b}}\rangle$ contains a dihedral subgroup as required. 

For a subset $J\subseteq I$, choose a minimal subgroup $D_J$ subject to the condition that its image modulo $N$ is the same as that of $\langle D_i\mid i\in J\rangle$. Note that $D_J\cap N\leq\Phi(D_J)$; cf. \cite[Lemma 2.8.15]{rz}. 

It follows that $\pi(D_J\cap N)\subseteq\pi(D_JN/N)$. Indeed, suppose that this is false and let $p\in\pi(D_J\cap N)$ while $p\not\in\pi(D_JN/N)$. Let $P$ be the $p$-Sylow subgroup of $D_J\cap N$ and $H$ a $p'$-Hall subgroup of $D_J$. We see that $D_J=PH$ and $HN=D_JN$. Because of minimality we are forced to conclude that $H=D_J$, a contradiction. Thus for every subset $J$ of the set $I$, we have $\pi(D_J\cap N)\subseteq\pi(D_JN/N)$.

Also, observe that $\pi(G/N)\subseteq\pi(N)$. Indeed, suppose that this is false and let $p\in\pi(G/N)$ while $p\not\in\pi(N)$. There is an index $i$ such that $S_i$ contains a non-trivial $p$-element $a$. Since $(|a|,|N|)=1$, in view of Lemma \ref{known} we have $C_{G/N}(a)=C_G(a)N/N$. Note that $C_{G/N}(a)$ contains a Cartesian product of infinitely many finite non-abelian simple groups while $C_G(a)$ is metabelian. This contradiction shows that $\pi(G/N)\subseteq\pi(N)$. 

Note also that $N$ is metabelian, since the centralisers in $G$ are metabelian and since different Sylow subgroups of $N$ commute. We will now show that there is a prime~$q$ and an infinite set of indices $J$ such that $q\not\in\pi(D_J)$ and $G$ possesses a non-trivial normal abelian pro-$q$ subgroup~$Q$.

At first, suppose that there is a prime $p$ such that $p_i=p$ for infinitely many indices $i$. Let $J$ be the set of all indices $i$ such that $p_i=p$. Then $\pi(D_J)=\{2,p\}$. Since $\pi(G/N)\subseteq\pi(N)$ and since the order of any finite non-abelian simple group is divisible by at least three primes, there exists an odd prime $q\in\pi(N)$ such that $q\not=p$.  Note that $N$ is the Fitting subgroup of $G$ which is characteristic. Hence since $N$ is metabelian, the group $G$ has a non-trivial normal abelian pro-$q$ subgroup $Q$.

Now suppose that for any prime $p$ there are only finitely many indices $i$ such that $p_i=p$. Pick any odd prime $q\in\pi(N)$. As before, the group~$G$ has a non-trivial normal abelian pro-$q$ subgroup $Q$. Let $J$ be the set of all indices $i$ with the property that $p_i\neq q$. Thus, again  $q\not\in\pi(D_J)$, as required.

By Lemma \ref{rrrr}, the subgroup $D_J$ has an open normal subgroup $K$ such that $C_Q(K)\neq1$. As $Q$ is abelian, we deduce that $QK\leq C_G(C_Q(K))$ and hence $QK$ is metabelian. Taking into account $q\not\in\pi(K)$, it follows that the commutator subgroup $K'$ centralises $Q$. Then so does $[G,K']$ and therefore $[G,K']$ is metabelian. Since $G/N$ is a product of finite non-abelian simple groups, in particular it has no normal metabelian subgroups and so $[G,K']\le N$. Equivalently, the image of $K'$ in $G/N$ is central. It follows that $K'\leq N$. But then the image of $D_J$ in $G/N$ is virtually abelian. This yields the desired  contradiction since the image of $D_J$ in $G/N$ is a Cartesian product of infinitely many non-abelian dihedral groups.
\end{proof}

We say that $G$ is \emph{semisimple} if it is isomorphic to a Cartesian product of  finite non-abelian simple groups. Recall that for a finite group $G$, the \emph{non-soluble length} $\lambda(G)$ of $G$ is the minimal number of non-soluble factors in a normal series of $G$,  each of whose factors is either soluble or semisimple. 
We are now in position to prove our main result:
\begin{proof}[Proof of Theorem \ref{thm:main}]
From Lemma~\ref{h(G)}, we have that any prosoluble subgroup~$H$ of~$G$ satisfies $h(H)\le 18$. Hence,  in view of 
\cite[Corollary 1.2]{khushu}, we have that $\lambda(\overline{G})\le 18$ for any finite quotient $\overline{G}$ of $G$. Indeed, if $\overline{G}=G/N$ and $KN/N$ is soluble, by \cite[Lemma 2.8.15]{rz} there is a prosoluble subgroup~$H$ such that $HN/N=KN/N$. 

Then by \cite[Lemmas~2 and 3]{Wilson}, there is a  finite normal series for $G$, all of whose factors are either prosoluble or semisimple. We proceed by induction on the length $\ell:=\ell(G)$ of this normal series. If $\ell=1$, then $G$ is either prosoluble or semisimple. In the former case,  we are done by Theorems~\ref{thm:prosoluble-with-metabelian-centr}   and \ref{thm:length-4}, and in the latter it follows that $G$ is finite.

Suppose now that the result is true for $\ell$, and consider a profinite group $G$ with a  normal series of length $\ell+1$, all of whose factors are either prosoluble or semisimple:
\[
1=G_0\le G_1\le \cdots \le G_\ell\le G_{\ell+1}=G.
\]
Suppose first that $G/G_\ell$ is prosoluble. By induction $G_\ell$ is virtually pro-$p$ or virtually soluble. Hence, using Lemma~\ref{lem:*}, it follows that $G$ is virtually prosoluble. We are then done upon applying Theorem~\ref{thm:length-4}. Finally we suppose that $G/G_\ell$ is semisimple. By applying Lemma~\ref{lem:*}, we may assume that $G$ is prosoluble-by-semisimple.  From \cite[Lemma 2.8.15]{rz}, there is a minimal subgroup $K$ such that $G=G_\ell K$ and also $K\cap G_\ell \leq \Phi(K)$. Recalling that $\Phi(K)$ is pronilpotent, we therefore obtain that $K$ is pronilpotent-by-semisimple. Then Lemma~\ref{unknown} yields that $K$ is virtually pronilpotent. As before Lemma~\ref{lem:*} and Theorem~\ref{thm:length-4} completes the proof.
\end{proof}


\begin{thebibliography}{b}
\bibitem{AS} C.~Acciarri and P.~Shumyatsky, Profinite groups with restricted centralizers of $\pi$-elements, \textit{Math. Zeit} \textbf{301} (2022),  1039--1045.

\bibitem{jaba} C.~Casolo, E.~Jabara and P.~Spiga, On the Fitting height of factorised soluble groups, \textit{J. Group Theory} \textbf{17} (2014), 911--924.

\bibitem{casolo} C.~Casolo and E.~Jabara, Finite soluble groups with metabelian centralizers, \textit{J. Algebra} \textbf{422} (2015), 318--333.


\bibitem{DMS} E.~Detomi, M.~Morigi, and P.~Shumyatsky, Profinite groups with restricted centralizers of commutators, \textit{Proc. Roy. Soc. Edinb. Sect. A} \textbf{150} (2020), 2301--2321.

\bibitem{FT} W.~Feit and J.~Thompson, Solvability of groups of odd order, \textit{Pacific J. Math.} \textbf{13} (1963), 775--1029.

\bibitem{Gorenstein} D.~Gorenstein, \textit{Finite groups}, Chelsea, New York, 1980.

\bibitem{Isaacs}  I.\,M.~Isaacs, \textit{Finite group theory},  Graduate Studies in Mathematics \textbf{92}, Amer. Math. Soc., 2008.

\bibitem{Ito}  N.~It\^{o}, \"{U}ber das Produkt von zwei abelschen Gruppen, \textit{Math. Z.} \textbf{62} (1955), 400--401.

\bibitem{Khukhro} E.\,I.~Khukhro, \textit{Nilpotent groups and their automorphisms},  De Gruyter, Berlin, New York, 1993.

\bibitem{khushu} E.\,I.~Khukhro and P. Shumyatsky, Nonsoluble and non-$p$-soluble length of finite groups, \textit{Israel J. Math.} \textbf{207} (2015), 507--525.

\bibitem{rz}   L.~Ribes and P.~Zalesskii, \textit{Profinite groups}, Springer-Verlag, Berlin, 2010.

\bibitem{Robinson} D.\,J.\,S.~Robinson, A Course in the Theory of Groups, Springer, New York, 1996.

\bibitem{Shalev} A.~Shalev, Profinite groups with restricted centralizers, \textit{Proc. Amer. Math. Soc.} \textbf{122 (4)} (1994), 1279--1284.


\bibitem{S1} P.~Shumyatsky, Profinite groups with pronilpotent centralizers, \textit{Israel J. Math.} \textbf{235} (2020), 325--347.

\bibitem{ccm} P. Shumyatsky,  On profinite groups in which centralizers have bounded rank, \textit{Comm. Contemp. Math.} \textbf{25 (10)} (2023), 2250055.

\bibitem{SZ}  P.~Shumyatsky and P.~Zalesskii, Profinite groups in which centralizers are virtually procyclic, \textit{J. Algebra} \textbf{586} (2021), 467--478.

\bibitem{zapa} P. Shumyatsky, P. Zalesskii and T. Zapata, Profinite groups in which centralizers are abelian, \textit{Israel J. Math.} \textbf{230} (2019), 831--854.

\bibitem{Suzuki}  M.~Suzuki, The nonexistence of a certain type of simple groups of odd order, \textit{Proc. Amer. Math. Soc.} \textbf{8} (1957), 686--695.


\bibitem{ProfiniteGroups} J.\,S.~Wilson, \textit{Profinite groups}, Oxford University Press, 1998.

\bibitem{Wilson} J.\,S.~Wilson,  On the structure of compact torsion groups, \textit{Monatsh. Math.} \textbf{96} (1983), 57--66.
\end{thebibliography}
\end{document}